\sloppy\pagestyle{plain}
\newcommand{\PP}{{\mathbb P}}
\newcommand{\CC}{{\mathbb C}}
\newtheorem{theorem}[equation]{Theorem}
\newtheorem*{theorem*}{Theorem}
\newtheorem{proposition}[equation]{Proposition}
\newtheorem*{proposition*}{Proposition}
\newtheorem{lemma}[equation]{Lemma}
\newtheorem{corollary}[equation]{Corollary}
\newtheorem*{corollary*}{Corollary}
\newtheorem*{problem*}{Problem}
\newtheorem*{question*}{Question}
\newtheorem*{construction*}{Construction}
\newtheorem*{maintheorem*}{Main Theorem}
\newcounter{iequation}
\newtheorem{itheorem}[iequation]{Theorem}
\newtheorem{iquestion}[iequation]{Question}
\newtheorem{iexample}[iequation]{Example}
\theoremstyle{remark}
\newtheorem{iremark}[iequation]{Remark}
\theoremstyle{definition}
\newtheorem{example}[equation]{Example}
\newtheorem*{example*}{Example}
\newtheorem{definition}[equation]{Definition}
\newtheorem*{definition*}{Definition}
\theoremstyle{remark}
\newtheorem{remark}[equation]{Remark}
\newtheorem*{remark*}{Remark}
\makeatletter\@addtoreset{equation}{section} \makeatother
\makeatletter\@addtoreset{equation}{section} \makeatother
\makeatletter\@addtoreset{iequation}{section} \makeatother
\author{Sukjoo Lee,
	Victor Przyjalkowski}
\title{Fibers of Landau--Ginzburg models and rationality}
\address{\emph{Sukjoo Lee}
	\newline
	\textnormal{Center for Geometry and Physics, Institute for Basic Science (IBS), Pohang 37673, South Korea.}
	\newline
	\textnormal{\texttt{sukjoo216@ibs.re.kr}}}
\address{\emph{Victor Przyjalkowski}
	\newline
	\textnormal{Steklov Mathematical Institute of Russian Academy of Sciences, 8 Gubkina street, Moscow 119991, Russia.}
	\newline
	\textnormal{National Research University Higher School of Economics, Russian Federation, Laboratory of Mirror Symmetry, NRU HSE, 6 Usacheva str., Moscow, Russia, 119048.}
	\newline
	\textnormal{\texttt{victorprz@mi-ras.ru, victorprz@gmail.com}}}
\begin{document}
	
	\begin{abstract}
		In this article, we study how the rationality of a Fano threefold is reflected in its standard mirror Landau--Ginzburg model and its deformations. The main result is that a Fano threefold is rational if and only if the monodromy around every reducible fiber of its generic mirror Landau--Ginzburg model is unipotent.
	\end{abstract}
	
	\maketitle

	\section{Introduction}
Mirror Symmetry provides a modern and challenging perspective on classical algebraic geometry. A mirror of a Fano variety is a so-called Landau--Ginzburg model, whose properties are closely related to those of the original Fano variety. Although most Mirror Symmetry conjectures, such as the Homological Mirror Symmetry conjecture, are very hard to prove, one can still study the geometry of algebraic varieties through their mirror counterparts. In particular, instead of working with the most general formulation, one may adopt a precise definition of these mirror objects, derive concrete numerical conjectures from the general Mirror Symmetry framework, and verify them for specific Fano varieties and their mirrors.

As a definition of the mirror dual objects, we take toric Landau--Ginzburg models or their Calabi--Yau compactifications (for more precise definitions and references see Subsection~\ref{s:CYcptn}). There is a series of conjectures relating their invariants
with geometric invariants of Fano varieties (see~\cite{Prz13},~\cite{PS15},~\cite{KKP17},~\cite{LP16},~\cite{ChP18},~\cite{HKP20},~\cite{ChP20}).
Let us give an example of statements of this kind.

\begin{itheorem}[\cite{Prz13},~\cite{ChP18},~\cite{PS15}]
\label{theorem:Hodge number}
Let $X$ be a smooth Fano variety of dimension $n$. Let $w\colon Y\to \CC$ be a Calabi--Yau compactification of a standard toric Landau--Ginzburg model for a Fano threefold $X$
or a Landau--Ginzburg model of Givental's type for a Fano complete intersection $X$. Then
		\[
		h^{1,n-1}(X)=\sum_{t \in \CC}(\rho_{\lambda}-1),\]
		where $\rho_{\lambda}$ is the number of irreducible components of $w^{-1}(\lambda)$.
\end{itheorem}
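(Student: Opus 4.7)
The plan is to pass to the Calabi--Yau compactification $\bar{w}\colon Y\to\PP^1$ of $w$ and translate the claimed identity into a Hodge-theoretic statement about the resulting fibration by Calabi--Yau $(n-1)$-folds. The first step I would take is to invoke the numerical mirror correspondence underlying this setup: for the class of LG models considered here one expects, and can verify either by the classification of Fano threefolds (in the $n=3$ case) or by explicit toric analysis (in the Givental complete intersection case), that $h^{1,n-1}(X)$ matches a specific Hodge-theoretic invariant of $Y$---essentially the dimension of the part of $H^{n-1}(Y,\CC)$ not accounted for by the generic fiber and by classes pulled back from the base.

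The second step is to analyze $\bar{w}$ via the Leray spectral sequence
\[
E_2^{p,q}=H^p(\PP^1,R^q\bar{w}_*\CC)\Rightarrow H^{p+q}(Y,\CC),
\]
which degenerates at $E_2$ by Deligne. The sheaf $R^{n-1}\bar{w}_*\CC$ differs from the intermediate extension of its restriction to the smooth locus only by skyscrapers supported at the critical values of $\bar{w}$: by the local invariant cycle theorem, the rank of the skyscraper at $\lambda$ is exactly $\rho_\lambda-1$, measuring the extra irreducible components of $\bar{w}^{-1}(\lambda)$ that are not captured by the nearby smooth fibers.

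Summing these local contributions, the skyscrapers yield a piece of total dimension $\sum_\lambda(\rho_\lambda-1)$ in $H^2(\PP^1,R^{n-2}\bar{w}_*\CC)$, which under the Leray identification and Hodge bigrading lies inside the $(1,n-1)$-part of $H^{n}(Y,\CC)$ singled out in step one. The remaining Leray terms correspond to the generic fiber and to the base, and they cancel against the ``trivial'' part of the mirror identification, leaving precisely the claimed equality $h^{1,n-1}(X)=\sum_\lambda(\rho_\lambda-1)$.

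The hard part is really step one, not the cohomological bookkeeping: one has to verify rigorously both that $w$ admits a smooth projective Calabi--Yau compactification with the predicted properties and that $h^{1,n-1}(X)$ corresponds under mirror symmetry to exactly the Hodge-theoretic quantity isolated above. In the Fano threefold case this is carried out through the Iskovskikh--Mori--Mukai classification together with explicit Minkowski-polynomial and nef-partition constructions of the standard LG models, while in Givental's complete intersection case it proceeds via a direct fibrewise analysis of the Givental Laurent polynomials in the spirit of \cite{PS15}. Once these ingredients are secured, the argument above assembles them into the stated formula as in \cite{Prz13,ChP18,PS15}.
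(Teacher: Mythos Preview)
The paper does not prove this theorem; it is quoted from \cite{Prz13}, \cite{ChP18}, and \cite{PS15} as an established background result. In those references the proofs are not conceptual in the sense you propose: for Fano threefolds the argument proceeds through the Iskovskikh--Mori--Mukai classification, constructing the standard Laurent polynomial and its Calabi--Yau compactification for each of the $105$ families and counting components directly, while for complete intersections \cite{PS15} performs an explicit fibrewise analysis of the Givental model. There is no uniform Leray/decomposition-theorem argument in the sources, so your proposal is genuinely different in spirit.

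That said, your sketch has real problems beyond being different. First, the indices are inconsistent and in fact wrong: you speak of skyscrapers in $R^{n-1}\bar w_*\CC$ and then place their contribution in $H^2(\PP^1,R^{n-2}\bar w_*\CC)$, but $H^2$ of a skyscraper sheaf on $\PP^1$ vanishes, and for $n=3$ the generic fiber is a $K3$ surface so $R^1\bar w_*\CC$ is generically zero. The quantity $\rho_\lambda-1$ is the rank of the skyscraper in $R^{2(n-1)}\bar w_*\CC$ (top cohomology of the fiber), not in $R^{n-1}$; the local invariant cycle theorem does not produce $\rho_\lambda-1$ in middle degree. Second, and more seriously, your ``step one'' is circular: the assertion that $h^{1,n-1}(X)$ matches the specific Hodge piece of $H^*(Y)$ you isolate is essentially the Katzarkov--Kontsevich--Pantev identification $h^{p,q}(Y,w)=h^{n-p,q}(X)$, which for threefolds is exactly what \cite{ChP18} proves by the same case-by-case work. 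Invoking it as input leaves nothing to prove; without it, your Leray bookkeeping computes an invariant of $(Y,w)$ but gives no link to $h^{1,n-1}(X)$. So the proposal, as written, either assumes the result or does not reach it.
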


Conjecturally the assertion of Theorem~\ref{theorem:Hodge number} is expected to hold for any smooth Fano varieties. Moreover, it has a deep
generalization, called Katzarkov--Kontsevich--Pantev conjectures, see~\cite{KKP17} and~\cite{LP16}.
Roughly, one can define analogues of Hodge numbers $h^{p,q}(Y,w)$ and $f^{p,q}(Y,w)$ for a Landau--Ginzburg model $(Y,w)$,
and if $(Y,w)$ is mirror dual to a Fano variety $X$ with $\dim X=\dim Y=n$, then for all possible $p,q \in \mathbb{Z}$, one should have
$$
h^{p,q}(Y,w)=f^{p,q}(Y,w)=h^{n-p,q}(X).
$$
For the threefold case, this was proven in~\cite{Ha21} and~\cite{ChP18}. For all smooth toric Fano complete intersections, the result was proven in~\cite{HL25}.

Among other conjectures and results of a similar kind, one stays apart. The reason is that conjectures like those presented above can be justified by deriving them from Homological Mirror Symmetry conjecture. However, we do not know how the following result is related to it, since algebraic–geometric properties such as rationality do not fit very well into the mirror picture.

\begin{itheorem}[\cite{KP09}]
\label{theorem:rank 1}
Let $(Y,w)$ be a Calabi--Yau compactification of Landau--Ginzburg model of Picard rank one Fano threefold $X$; let us assume that $X$
is general in its family. Then $w$ has at most one
reducible fiber, and the monodromy around this fiber is unipotent if and only if 
$X$ is rational.
\end{itheorem}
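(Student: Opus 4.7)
The plan is to proceed by case analysis over the classification of smooth Fano threefolds $X$ of Picard rank one. There are seventeen deformation families, indexed by the anticanonical degree $(-K_X)^3$ (equivalently by the genus $g$), and for each family an explicit standard toric Landau--Ginzburg model together with a Calabi--Yau compactification $(Y,w)$ has been constructed in the literature: via Givental's formula for complete intersections in (weighted) projective spaces, and by direct constructions for the exceptional families $V_5$, $V_{18}$, and $V_{22}$.

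First I would bound the number of reducible fibers. Applying Theorem~\ref{theorem:Hodge number}, the total quantity $\sum_{t\in\CC}(\rho_t - 1)$ equals $h^{1,2}(X)$, which is explicitly tabulated for each family. In the cases where $h^{1,2}(X)=0$ every fiber is irreducible and the statement is vacuous. When $h^{1,2}(X)>0$ I would verify directly from the construction of $(Y,w)$ that all the extra components lie over a single critical value; this is the place where the Picard rank one hypothesis enters crucially, since larger Picard rank would typically split reducible behavior among several critical values.

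Assuming the single reducible fiber, the task reduces to computing its monodromy in each family and matching unipotency against the classical rationality table: rationality holds for $\PP^3$, the quadric, $V_5$, and the generic $V_{22}$, while Iskovskikh--Manin, Clemens--Griffiths, Beauville, and subsequent work exclude rationality for the remaining families. For each case I would exhibit a semistable model of $w$ near the critical value and read off the monodromy from the dual complex of the central fiber; unipotency then becomes a combinatorial condition on the components and their intersection pattern, equivalent to the triviality of a certain Picard--Lefschetz twist on vanishing cycles.

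The main obstacle, I expect, is the monodromy computation in the non-complete-intersection cases, especially $V_{22}$, whose LG model is not given by a simple Laurent polynomial on a torus and whose semistable reduction is delicate. Here one would either exploit birational links between Fano threefolds of Picard rank one (for example, Sarkisov-type chains relating prime Fano threefolds of adjacent genera) to transport monodromy information from a better-understood model, or carry out a direct Clemens--Schmid analysis with explicit knowledge of the limiting mixed Hodge structure on the vanishing cohomology. Once these computations are assembled, the theorem reduces to a finite verification across the seventeen families.
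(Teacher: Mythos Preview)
Your overall plan --- case-by-case over the seventeen families, using Theorem~\ref{theorem:Hodge number} to locate the reducible fiber --- matches the paper's strategy. But two concrete errors would derail the execution.

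First, your rationality list is incomplete: besides $\PP^3$, $Q$, $V_5$, and $V_{22}$, the families $V_4$, $V_{12}$, $V_{16}$, $V_{18}$ (that is, $X_{1-14}$, $X_{1-6}$, $X_{1-8}$, $X_{1-9}$) are also rational, and several of these have $h^{1,2}>0$, so there is real work to do on the rational side. More importantly, you misidentify where the difficulty lies. The family $V_{22}$ has $h^{1,2}=0$, so by Theorem~\ref{theorem:Hodge number} its Landau--Ginzburg model has no reducible fiber and there is nothing to check; the same holds for $V_5$. The genuinely delicate cases are $X_{1-7}$ (a linear section of $\mathrm{Gr}(2,6)$) and $X_{1-13}$ (the cubic threefold), both non-rational: here the reducible fiber has all components of multiplicity one yet is \emph{not} simple normal crossings (for the cubic it is an ``open book'' with three surfaces meeting along a common curve), so neither Landman's theorem nor any multiplicity obstruction applies.

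Second, your proposed mechanism for detecting non-unipotency --- ``exhibit a semistable model and read off the monodromy from the dual complex'' --- has a gap. Semistable reduction in general requires a cyclic base change of some degree $N$, after which the monodromy you compute is $T^N$; knowing that $T^N$ is unipotent tells you only that $T$ is quasi-unipotent, so the dual complex of the base-changed model alone does not decide unipotency of $T$. The paper avoids this entirely. For the non-rational families with a non-reduced fiber component ($X_{1-1}$ through $X_{1-5}$, $X_{1-11}$, $X_{1-12}$) it argues, via Kulikov's classification together with Kawamata's theorem that $K$-equivalent threefolds differ only by flops, that a non-reduced central fiber forces strictly quasi-unipotent monodromy (Proposition~\ref{prop:quasi-unipotency}). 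For the two remaining non-rational cases $X_{1-7}$ and $X_{1-13}$ it writes down the Picard--Fuchs operator explicitly and reads off non-unit eigenvalues of the local monodromy from the indicial equation at the reducible fiber. For the rational cases with $h^{1,2}>0$ it verifies that the reducible fiber is simple normal crossings with reduced components and invokes Griffiths--Landman--Grothendieck. No semistable reduction, Clemens--Schmid analysis, or Sarkisov link is needed.
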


Note that if we require Dolgachev--Nikulin duality to hold for anticanonical sections of $X$ and fibers of $w$, then $Y$ is unique
(up to flops), see~\cite[Section 4.2]{ILP13} (another approach to uniqueness is given by rigid maximally-mutable Laurent polynomials).
Note also that non-unipotency appears when the reducible fiber is either non-reduced or not simple normal crossings. In this paper, we generalize Theorem~\ref{theorem:rank 1} to the case of higher Picard rank.

Let us recall that there are $105$ deformation families of smooth Fano threefolds. We follow the numeration from~\cite{IP},
	adding the variety found in~\cite{MM04} to the end of the list of Picard rank $4$ threefolds.
	We denote by $X_{m-n}$ a general Fano threefold of Picard rank $m$ and number $n$ from the lists.
	Among $105$ deformation families of three-dimensional Fano manifolds, $98$ have members with very ample anticanonical class.
	According to~\cite[Theorem 4.1]{CKPT21}, mutation-equivalence classes of \emph{rigid maximally-mutable Laurent polynomials}
	(see~\cite{CKPT21}) such that there is a polynomial in each class whose Newton polytope is reflexive, correspond one-to-one to the~$98$ families.
	Let us call them \emph{standard}.
	For each of the rest $7$ families there exist classes of rigid maximally-mutable Laurent polynomials as well, however their uniqueness
	is not proven.
	Thus for these $7$ cases let us choose those of them that are discussed in~\cite{ChP18} and call them standard as well. In \cite{CCGK16}, it is proven that the quantum period of $X$ is equal to the period of corresponding standard Laurent polynomial.

    Hironaka-type arguments immediately show that Calabi–Yau compactifications of any Laurent polynomial within the same mutation class differ only by flops (see Remark~\ref{remark:mutation equivalence}). Therefore, to study the components of fibers, it suffices to choose any element of a standard class; its Calabi–Yau compactification will be referred to as the \emph{standard Landau--Ginzburg model}.

One can ask the analogue of Theorem \ref{theorem:rank 1}; $X$ is rational if and only if for its standard Landau--Ginzburg model $(Y, w)$, the monodromy around any reducible or non-reduced fiber, if such a fiber exists, is unipotent. However, this turns out not to be the case; see Example~\ref{example:not unipotent 2-12}. To address this issue, one needs to consider deformations of a standard Landau--Ginzburg model.

	In~\cite{DHKOP24}, it is proven that the deformation space of the standard Landau--Ginzburg models of a smooth Fano threefold $X$ is unobstructed, and that any small deformation can be obtained by varying the coefficients of the corresponding Minkowski polynomial. Therefore, one can construct a parameter space of Landau--Ginzburg models with $(\mathrm{rk}(\mathrm{Pic}(X))-1)$ parameters. In other words, there is a $\left(\mathrm{rk}(\mathrm{Pic}(X))-1\right)$-dimensional family of Landau--Ginzburg models that are mirror to the corresponding Fano threefolds equipped with general complexified divisor classes (see Definition~\ref{def:parametrized LG}). We refer to such families as \emph{parameterized Landau--Ginzburg models}, and their members as \emph{deformed Landau--Ginzburg models}. A deformed Landau--Ginzburg model is called \emph{generic} if it is a generic element of the family; the original standard Landau--Ginzburg model is called \emph{anticanonical}.
	
	Thus, unlike the Picard rank one case, the unipotency of the monodromy of a Landau--Ginzburg model may change under variation, especially when some singular fibers collide.
\begin{iexample}[{see Example~\ref{eg:2-12} for details}]
\label{example:not unipotent 2-12}
Consider a Fano threefold $X_{2-12}$, a blow-up of $\PP^3$ in genus $3$ curve. One can see that, for a generic deformed Landau–Ginzburg model, there are five singular fibers: one simple normal crossings reducible fiber and four fibers each containing a single ordinary double point. In particular, the monodromy around the reducible fiber is unipotent. However, as the parameter varies toward the anticanonical Landau–Ginzburg model, one of the ordinary double points collides with the reducible fiber, making the monodromy around it non-unipotent. See Example~\ref{eg:2-12} for a proof of this fact.
\end{iexample}

This example justifies the following generalization of Theorem~\ref{theorem:rank 1}, which is the main result of the paper.

	\begin{itheorem}
		\label{theorem:any rank rationality}
		Let 
$X$ be a smooth Fano threefold that is general in the corresponding family. Let $w\colon Y\to \mathbb{C}$ be a generic Landau--Ginzburg model for $X$.
		Then $X$ is rational if and only if either every fiber of $w$ is irreducible and reduced, or the monodromy in the second cohomology at all reducible fibers of $w$ is unipotent.
	\end{itheorem}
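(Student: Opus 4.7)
The proof proceeds by reducing to a case-by-case verification over the $105$ deformation families of smooth Fano threefolds. The Picard rank one case is supplied by Theorem~\ref{theorem:rank 1}: when $\mathrm{rk}(\mathrm{Pic}(X))=1$, the parameter space of Landau--Ginzburg models produced by~\cite{DHKOP24} is zero-dimensional, so a generic deformed LG model coincides with the anticanonical one and the assertion reduces directly to~\cite{KP09}. It therefore suffices to treat the families of Picard rank at least two.

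For each such family the strategy is the following. Using the tables of standard Laurent polynomials in~\cite{ChP18, CCGK16}, one writes down a rigid maximally-mutable representative $f_0$ together with its $(\mathrm{rk}(\mathrm{Pic}(X))-1)$-parameter deformation $f_{\underline t}$ provided by~\cite{DHKOP24}, and forms the Calabi--Yau compactification $w\colon Y\to\CC$. The generic fibers are smooth K3 surfaces, so by Kulikov's theorem the monodromy at a reducible or non-reduced fiber is unipotent precisely when that fiber is a semistable (Type II or III) degeneration, that is reduced with simple normal crossings. The bookkeeping on components is controlled by Theorem~\ref{theorem:Hodge number}, which forces $\sum_{\lambda}(\rho_\lambda-1)=h^{1,2}(X)$. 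I would then identify the discriminant $\Delta(\underline t)\subset\CC$ for generic $\underline t$, determine whether each fiber over $\Delta(\underline t)$ is reducible or non-reduced, and, if so, check whether the local model is semistable by a toric resolution of the compactified total space.

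The verification then splits along the known rationality classification of smooth Fano threefolds. For each rational family of Picard rank $\geq 2$ I would show that a generic parameter $\underline t$ separates the colliding singularities present at the anticanonical value $\underline t=\underline 0$, so that every reducible fiber becomes reduced SNC (hence has unipotent monodromy) and all remaining singular fibers carry only isolated ordinary double points. Example~\ref{example:not unipotent 2-12}, which concerns a rational threefold, is exactly this picture read forward: the non-unipotency at the anticanonical point dissolves under generic perturbation. For each non-rational family I would, conversely, exhibit at least one reducible or non-reduced fiber whose non-unipotent monodromy \emph{persists} for every $\underline t$, so that the generic LG model still fails the unipotency criterion.

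The step I expect to be most delicate is this persistence of non-unipotent fibers in the non-rational cases. The deformation space has dimension only $\mathrm{rk}(\mathrm{Pic}(X))-1$, which is typically smaller than the number of singular fibers of the anticanonical LG model, so certain collisions of singularities are forced by the combinatorics of the Newton polytope of $f_{\underline t}$ rather than removed by a generic choice of parameter. One must pinpoint exactly which collisions are intrinsic to the family and then compute the local monodromy on the K3 cohomology, for instance via the Clemens--Schmid exact sequence or by inspection of the dual complex of a semistable reduction. Matching this combinatorial obstruction with the classical obstruction to rationality -- a non-trivial intermediate Jacobian that is not a product of Jacobians of curves -- is what welds the two sides of the equivalence together, and making that correspondence explicit family by family is the technical heart of the argument.
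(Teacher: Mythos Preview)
Your overall architecture---reduce to a case analysis, use the parameterized family from~\cite{DHKOP24}, separate rational from non-rational---matches the paper. But the criterion you lean on, ``by Kulikov's theorem the monodromy \ldots\ is unipotent precisely when that fiber is a semistable (Type~II or~III) degeneration, that is reduced with simple normal crossings,'' is not a biconditional in the form you need. Griffiths--Landman gives one direction (reduced SNC $\Rightarrow$ unipotent). Kulikov gives the other only \emph{after birational modification}: a unipotent degeneration is a flop of a Kulikov model, not itself necessarily SNC. The paper exploits this asymmetry carefully. On the rational side, $X_{3-2}$ has a reducible fiber that is \emph{not} SNC for any value of the parameter (four components meeting pairwise along $\mathbb{P}^1$'s and all passing through two common points), yet the monodromy is unipotent; the paper establishes this not geometrically but by writing down the Picard--Fuchs operator and reading off the local exponents, together with an isomonodromy lemma to propagate the answer across the parameter space. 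Your plan would misclassify this case.

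The same issue bites on the non-rational side. For most non-rational families the paper finds a non-reduced component in the parameterized fiber, and then your Kulikov argument (flops preserve multiplicities, so a non-reduced fiber cannot flop to a Kulikov model) does work. But $X_{1-7}$, $X_{1-13}$, $X_{2-5}$, $X_{2-11}$ have reducible fibers that are reduced yet not SNC---an ``open book'' of components sharing a common curve. Here neither direction of your criterion applies, and the paper again resorts to explicit Picard--Fuchs computations to exhibit non-unit eigenvalues; for the rank-two cases $X_{2-5}$ and $X_{2-11}$ it further needs a dedicated isomonodromy argument (the base loci of the pencil happen to be independent of the deformation parameter) to pass from the anticanonical to the generic model. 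Your proposed tools, Clemens--Schmid or the dual complex, presuppose a semistable model and so would require first performing a semistable reduction of these open-book fibers, which is considerably more work than the Picard--Fuchs route. Finally, the link you sketch to intermediate Jacobians is suggestive but plays no role in the actual argument: rationality is simply imported from the existing classification, and the two sides of the equivalence are verified independently.
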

	Note that one can get rationality data of smooth Fano threefolds from~\cite[\S\S 12.2-12.6]{IP} and references therein.
	
	\begin{iremark}
		We offer a heuristic interpretation of this theorem from the viewpoint of Mirror Symmetry. Let's regard $X$ on the $B$-side and its anticanonical Landau--Ginzburg model $(Y,w)$ on the $A$-side. Homological Mirror Symmetry predicts an equivalence of the derived category of coherent sheaves of $X$, that is $D^bCoh(X)$ and Fukaya category of $(Y,w)$, that is $Fuk(Y,w)$. The latter category admits a semiorthogonal decomposition indexed by the singular fibers of $w$, which should have a mirror counterpart. A natural expectation is that the component corresponding to the reducible fiber of $w$ mirrors to the Kuznetsov component of $D^bCoh(X)  $. In this picture, the unipotency of the monodromy could be viewed as indicating that the Kuznetsov component is not fractional Calabi--Yau category, while the interpretation to the rationality of $X$ is mysterious (cf. \cite{Kuz09}).
		
		Deformation of Landau--Ginzburg model $(Y,w)$ can be understood as a deformation of symplectic structure on $X$. Such deformations may affect the stability conditions on  $D^bCoh(X)$ and $Fuk(Y,w)$, potentially inducing wall crossings that alter their semiorthogonal decompositions. Theorem~\ref{theorem:any rank rationality} suggests that, for a generic choice of stability condition, the expected mirror correspondence above can be recovered.
\end{iremark}

It is natural to generalize Theorem~\ref{theorem:any rank rationality} to higher dimension cases.
Although there is no classification of Fano varieties in dimensions greater then $3$, complete
intersections often serve as standard test examples.

\begin{iexample}
\label{ex: degree N in P^n}
    Let $X$ be a smooth hypersurface of degree $n$ in $\mathbb{P}^{n}$.  Let $w$ be a Calabi--Yau compactification
of toric Landau--Ginzburg model of Givental's type
    $$
    \frac{(x_1+\cdots+x_{n-1}+1)^n}{\prod_{i=1}^{n-1} x_i}\in \CC[x_1^{\pm 1},\cdots, x_{n-1}^{\pm 1}]
    $$
for $X$, see~\cite{PS15}.
It is clear that the component of the fiber  $w^{-1}(0)$ lying on the torus is of multiplicity $n$, so by Griffiths--Landman--Grothendieck
theorem the monodromy around it is not unipotent. Note that $X$ is not rational by~\cite{Pu}.
\end{iexample}

However the life seems not quite so perfect, due to the following.

\begin{iexample}
\label{ex: cubic fourfold}
    Let $X$ be a smooth cubic fourfold. Let $w$ be a Calabi--Yau compactification
of toric Landau--Ginzburg model of Givental's type
    $$
    \frac{(x+y+1)^4}{xyzt}+z+t\in \CC[x^{\pm 1},y^{\pm 1},z^{\pm 1}, t^{\pm 1}],
    $$
    for $X$, see~\cite{PS15}.
    All its fibers are reduced, and a unique reducible fiber of $w$ consists of two components, see~\cite{KP09}. They intersect along a $K3$ surface, elliptically fibered with three $\widetilde{E}_6$ singular fibers.
    However one can check that 
    the monodromy around this unique reducible fiber is unipotent.
    Classical conjecture states that (very) general cubic fourfold is not rational (cf.~\cite{KKPY25}), while there is a series of examples of rational smooth cubic fourfolds.
\end{iexample}

Example~\ref{ex: cubic fourfold} shows that the analogue of Theorem~\ref{theorem:any rank rationality} in higher-dimensional cases is likely to be more complicated. Non-unipotency of the monodromy for a generic Landau–Ginzburg model should guarantee non-rationality, whereas in the unipotent case, more subtle considerations need to be taken into account.

\begin{iquestion}\label{Question:higher dimensional}
Is it true that a smooth Fano variety 
is not rational if
monodromy of its generic Landau--Ginzburg model 
is not unipotent?
\end{iquestion}

\medskip
The paper is organized as follows. In Section~\ref{section:prelim}, we introduce the preliminaries needed to formulate and prove Theorem~\ref{theorem:any rank rationality}, including toric Landau--Ginzburg models (with parameterized versions) and the construction of their Calabi--Yau compactifications as suggested in~\cite{ChP18}. In Section~\ref{section:proof}, we prove Theorem~\ref{theorem:any rank rationality} by studying Fano threefolds on a case-by-case basis. Subsection~\ref{subsection:rational} focuses on rational Fano threefolds, while Subsection~\ref{subsection:non-rational} deals with non-rational ones. Finally, for the reader's convenience, all data and references are collected in a table in Section~\ref{section:table}.

	\section*{Acknowledgement}
	We would like to thank Andrew Harder for helpful discussions. S.\,L. was supported by the Institute for Basic Science (IBS-R003-D1).
	
	\section{Mirror Landau--Ginzburg models for Fano 3-folds}
	\label{section:prelim}
	
	\subsection{Toric Landau--Ginzburg models and Calabi--Yau compactifications}\label{s:CYcptn}
	
	We first recall the notion of toric Landau--Ginzburg model introduced in ~\cite{Prz13}.
More precise definitions and details can be found in~\cite{Prz18}.
Let $\varphi[f]$ be the constant term of a Laurent polynomial $f$. Define \emph{the main period} for $f$ to be the following formal series: $I_f(t)=\sum \varphi[f^j]t^j$.
	\begin{definition}
		A \emph{toric Landau--Ginzburg model} for a pair of a smooth Fano variety of $X$ of dimension $n$ and a divisor $D$ on it is a Laurent polynomial $f \in \mathbb{C}[x_1^{\pm 1}, \cdots, x_n^{\pm 1}]$ which satisfies the following conditions:
		\begin{enumerate}
			\item (Period condition): $I_{f}=\widetilde{I}_0^{(X,D)}$, where $\widetilde{I}_0^{(X,D)}$ is the so called \emph{restriction of the constant term of regularized $I$-series to the anticanonical direction}.
			\item (Calabi--Yau condition): There exists a relative compactification of a family
			\[
			f:(\mathbb{C}^*)^n \to \mathbb{C}\]
			whose total space is a (non-compact) smooth Calabi--Yau variety $Y$. Such compactification is called a \emph{Calabi--Yau compactification}.
			\item (Toric condition) There is a degeneration $X \rightsquigarrow T_X$ to a toric variety $T_X$ whose fan polytope is same as the Newton polytope of $f$.
		\end{enumerate}
	\end{definition}

Since deformed (and in particular anticanonical) Landau--Ginzburg models are Minkowski ones, the following holds.
	
\begin{theorem}[{see~\cite[Theorem 1]{Prz16}}]
\label{theorem:standard are toric}
	Let $X$ be a smooth Fano threefold, and let 
 $f$ be a deformed Landau--Ginzburg model of $X$. Then $f$ is a toric Landau--Ginzburg model. More precise, the following diagram exists. 	
\begin{equation}\tag{$\oplus$}
		\label{equation:CCGK-compactification}
		\xymatrix{
			(\mathbb C^*)^3\ar@{^{(}->}[rr]\ar@{->}[d]_{f}&&Y\ar@{->}[d]^{w}\ar@{^{(}->}[rr]&&Z\ar@{->}[d]^{u}\\
			\mathbb{C}\ar@{=}[rr]&&\mathbb{C}\ar@{^{(}->}[rr]&&\mathbb{P}^1}
	\end{equation}
	where 
	the variety $Y$ is a smooth threefold with $K_Y\sim 0$,
	and $Z$ is a smooth compact threefold such that
	$$
	-K_Z\sim u^{-1}\big(\infty\big).
	$$
\end{theorem}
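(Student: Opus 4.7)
The plan is to verify each of the three conditions in the definition of a toric Landau--Ginzburg model for the deformed polynomial $f$, and then to produce the compactification diagram~\eqref{equation:CCGK-compactification}, following the strategy used for anticanonical Minkowski polynomials and exploiting the fact that deformation preserves the Newton polytope.

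First, I would recall from~\cite{DHKOP24} that any deformed Landau--Ginzburg model $f$ is obtained from a standard (anticanonical) Minkowski polynomial $f_0$ by varying coefficients of its monomials; in particular $f$ and $f_0$ share the same Newton polytope $\Delta$. The toric condition is therefore inherited from the anticanonical case: the toric variety $T_X$ with fan polytope $\Delta$ is fixed, and the degeneration $X\rightsquigarrow T_X$ that holds for $f_0$ holds for $f$. The period condition follows from~\cite{CCGK16} together with~\cite{DHKOP24}, since deformed LG models are constructed precisely so that their main periods match the regularized $I$-series of $X$ twisted by the corresponding complexified divisor class.

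The heart of the argument is the Calabi--Yau compactification. I would mimic the construction of~\cite{Prz16}: embed $(\mathbb{C}^*)^3$ into the projective toric variety $\mathbb{P}_\Delta$, take the closure of the graph of $f$ inside $\mathbb{P}_\Delta\times\mathbb{P}^1$, and perform a sequence of toric blow-ups and small resolutions to obtain a smooth $Z$ together with the morphism $u$. The reflexivity of $\Delta$ (for $98$ families, with the remaining $7$ treated in~\cite{ChP18}) guarantees that the fiber over $\infty$ is anticanonical, so $-K_Z\sim u^{-1}(\infty)$, whence $K_Y\sim 0$ on the complement. The crucial input that makes the resolution work uniformly in coefficients is the Minkowski decomposition of each facet of $\Delta$: on each toric boundary stratum, the leading term of $f$ factors through an $(x_1+\cdots+x_k+1)^m$-type expression, and the singularity types of the compactified fibers along those strata depend only on $\Delta$ and on this combinatorial factorisation, not on the numerical coefficients.

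The main obstacle is ensuring that the resolution constructed for $f_0$ remains valid for the deformed $f$. Concretely, one must check, via semicontinuity, that the singularities of the closure of the graph for generic deformed coefficients are no worse than in the anticanonical case, so that the same sequence of toric and small resolutions smooths $Z$ and leaves $Y$ Calabi--Yau. A subtler point arises for the seven non-very-ample families, where the small resolutions appearing in~\cite{ChP18} contract flopping curves that must be shown to persist (and remain resolvable) under deformation; here a case-by-case check in the spirit of~\cite{ChP18} is required, and this is where I would expect the proof to be most delicate.
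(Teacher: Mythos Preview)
The paper does not actually prove this theorem: it is stated with the attribution ``see~\cite[Theorem 1]{Prz16}'' and the only justification given in the text is the single sentence preceding the statement, namely that deformed (and in particular anticanonical) Landau--Ginzburg models are Minkowski polynomials, so that~\cite[Theorem 1]{Prz16} applies directly. In other words, the paper's entire argument is the observation that the deformation procedure of~\cite{DHKOP24} produces Minkowski Laurent polynomials with the same Newton polytope, and then the citation of the black-box result for such polynomials.

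Your proposal, by contrast, is essentially a sketch of how one would \emph{re-prove}~\cite[Theorem 1]{Prz16} while simultaneously checking that every step is stable under the coefficient deformation. The outline you give (inherit the toric condition from the fixed polytope, obtain the period condition from~\cite{CCGK16} and~\cite{DHKOP24}, construct the compactification via the toric/small resolution procedure driven by Minkowski factorisations of facets) is broadly correct and is indeed the content of~\cite{Prz16}; your caveat about the seven non--very-ample families requiring a separate analysis from~\cite{ChP18} is also apt. But none of this work is carried out in the present paper: the authors simply reduce to the Minkowski case and invoke the reference. So your approach is not wrong, but it is far more detailed than what the paper does, and the ``main obstacle'' you flag (uniformity of the resolution in the deformation parameters) is precisely what the Minkowski hypothesis in~\cite{Prz16} already absorbs---once one knows the deformed polynomial is Minkowski, no further semicontinuity argument is needed.
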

\noindent For simplicity, we call $(Y, w)$ a (compactified) Landau--Ginzburg model.

\begin{remark}[{cf.~\cite[after (1.3)]{ChP20} and~\cite[after Problem 13]{KasPr22}}]
\label{remark:mutation equivalence}
Let $f$ and $g$ be two mutationally equivalent toric Landau--Ginzburg models (in particular, one can take $f=g$).
Let $(Y_f,w_f)$ and $(Y_g,w_g)$ be their (log) Calabi--Yau compactifications.
Since  $Y_f$ and $Y_g$ are $K$-equivalent, by~\cite[Theorem 1]{Ka08} they are birational in codimension $2$.
In particular, the number of components and multiplicities
of components of their fibers $w_f^{-1}(p)$ and $w_g^{-1}(p)$ are the same for any $p$.
\end{remark}

	The most natural way to construct a Calabi--Yau compactification is described in~\cite{Prz16}.
	However, to study reducible fibers of $w$, it is more convenient to use compactification method introduced in~\cite{ChP18}. Let us briefly recall it below; more details can be seen in~\cite[Section 1.14]{ChP18}.

	Every deformed toric Landau--Ginzburg model $f$ we are interested in can be homogenized to the quartic $f_4(x,y,z,t)$ in $\mathbb{P}^3$ so that it gives a pencil $\mathcal{S}$ of quartic surfaces on $\mathbb{P}^3$ given by
	\begin{equation*}\label{equation:quartic surface}
		S_\lambda=\{f(x,y,z,t)=\lambda xyzt\}.
	\end{equation*}
	for $\lambda \in \mathbb{C} \cup \{\infty\}$. This expands the commutative diagram \eqref{equation:CCGK-compactification} to the following commutative diagram:
	\begin{equation*}
		\label{equation:diagram}
		\xymatrix{
			(\mathbb{C}^*)^3\ar@{^{(}->}[rr]\ar@{->}[d]_{f}&&Y\ar@{->}[d]^{w}\ar@{^{(}->}[rr]&&Z\ar@{->}[d]^{u}&& V\ar@{-->}[ll]_{\chi}\ar@{->}[d]^{g}\ar@{->}[rr]^{\pi}&&\mathbb{P}^3\ar@{-->}[lld]^{\phi}\\
			\mathbb{C}\ar@{=}[rr]&&\mathbb{C}\ar@{^{(}->}[rr]&&\mathbb{P}^1\ar@{=}[rr]&&\mathbb{P}^1&&}
	\end{equation*}
	where $\phi$ is a rational map given by the pencil $\mathcal{S}$, and $\pi:V \to \mathbb{P}^3$ is an explicit birational map with the threefold $V$ being smooth, and $\chi$ is a composition of flops (which is usually trivial).
	
	Let $\widetilde{S}_\lambda$ be its proper transform of $S_\lambda$ on the threefold $V$, and let $E_1, \cdots, E_n$ be the $\pi$-exceptional divisors for some $n$. Then
	\[
	K_V+\widetilde{S}_\lambda+\sum_{i=1}^n\mathbf{a}^\lambda_i E_i \sim \pi^*(K_{\mathbb{P}^3}+S_\lambda) \sim 0\]
	for some non-negative integers $\mathbf{a}_1^\lambda, \cdots, \mathbf{a}_n^\lambda$. Since $-K_V \sim g^{-1}(\lambda)$, we have
	\[
	g^{-1}(\lambda)=\widetilde{S}_\lambda+\sum_{i=1}^n\mathbf{a}^\lambda_i E_i . \]
	Since $\chi$ is a composition of flops, this equation implies that
	\begin{equation}\label{equation:counting components1} \tag{$\otimes$}
		[u^{-1}(\lambda)]=[S_\lambda]+[{\text{the number of indices } i\in \{1, \cdots, n\} \text{ such that } \mathbf{a}_i^\lambda >0}]
	\end{equation}
	where $[-]$ indicates the number of irreducible components.
	
	The computation of $[S_\lambda]$ is straightforward (e.g.  ~\cite[Lemma 1.5.3]{ChP18}). There are two different types of the other term. On the base locus of the pencil $\mathcal{S}$, there are  irreducible curves $C_1, \cdots, C_r$ and a finite set of points $\Sigma$ which are singular points for $S_\lambda$ for a generic $\lambda \in \mathbb{C}$.
For every base curve $C_j$ and every point $P \in \Sigma$, we define
	\begin{align*}
		&\mathbf{C}_j^\lambda=[{\text{the number of indices } i\in \{1, \cdots, n\} \text{ such that } \mathbf{a}_i^\lambda >0 \text{ and }\pi(E_i)=C_j}], \\
		&\mathbf{D}_P^\lambda=[{\text{the number of indices } i\in \{1, \cdots, n\} \text{ such that } \mathbf{a}_i^\lambda >0 \text{ and }\pi(E_i)=P}].
	\end{align*}
Then the equation \eqref{equation:counting components1} becomes
	\[
	[u^{-1}(\lambda)]=[S_\lambda]+\sum_{j=1}^r\mathbf{C}_j^\lambda+\sum_{P\in \Sigma}\mathbf{D}_P^\lambda.
	\]
We first describe how to compute the number $\mathbf{C}_j^\lambda$ and the multiplicity $\mathbf{a}_i^\lambda$ of $E_j$ that contracts to $C_j$.
	
	For every $\lambda \in \mathbb{C}\cup \{\infty\}$ and every $j \in \{1, \cdots, r\}$, we let
	\[
	\mathbf{M}_j^\lambda=\mathrm{mult}_{C_j}(S_\lambda).\]
	For any two distinct quartic surfaces $S_{\lambda_1}$ and $S_{\lambda_2}$ in the pencil $\mathcal{S}$, we have
	\[
	S_{\lambda_1} \cdot S_{\lambda_2}=\sum_{j=1}^r\mathbf{m}_jC_j\]
	for some positive numbers $\mathbf{m}_1, \cdots, \mathbf{m}_r$. Note that $\mathbf{m}_i \geq \mathbf{M}_i^\lambda$ for every $\lambda \in \mathbb{C}\cup\{\infty\}$.
	
	Since computation of the number $\mathbf{C}_j^\lambda$ and $\mathbf{a}_i^\lambda$ can be checked in a general point of the curve $C_j$, we may assume that $C_j$ is smooth. Since general surfaces in the pencil $\mathcal{S}$ are smooth at general point of the curve $C_j$, there is a $\mathbf{m}_j$-sequence of blow ups of smooth curves
	\[
	V_{\mathbf{m}_j} \xrightarrow{\gamma_{\mathbf{m}_j}} V_{\mathbf{m}_j-1} \xrightarrow{\gamma_{\mathbf{m}_j}-1} \cdots \xrightarrow{\gamma_2} V_1 \xrightarrow{\gamma_1} \mathbb{P}^3\]
	such that $\gamma_k$ is the blow up of the curve $C^{k-1}_j$ where $C^0_j=C_j$ and $C_j^{k-1}$ is a base curve contained in the proper transform of general surfaces in $\mathcal{S}$ on the threefold $V_{k-1}$  with $\gamma_k(C_j^{k})=C_j^{k-1}$.
	
	For every $k \in \{1, \cdots, \mathbf{m}_j\}$, denote by $F_k$ the exceptional surface of the $k$-th blow up $\gamma_k$ and  $S_\lambda^k$ the proper transform of the surface $S_\lambda$ on the threefold $V_k$. Then
	\[\sum_{k=0}^{\mathbf{m}_j-1} \mathrm{mult}_{C_j^k}(S_\lambda^k)=\mathbf{m}_j.\]
	Moreover, for every $i \in \{1, \cdots, n\}$ such that $\beta(E_i)=C_j$, there exists $k=k(i) \in \{1, \cdots, \mathbf{m}_j-1\}$ such that $E_i$ is the proper transform of the divisor $F_k$ on the threefold $V$. Conversely, for every $k \in \{1, \cdots, \mathbf{m}_j-1\}$, there is $i \in \{1, \cdots, n\}$ such that $E_i$ is the transform of the divisor $F_k$. Therefore,
	\[\mathbf{a}_i^\lambda=\sum_{k=0}^{k(i)-1}\left(\mathrm{mult}_{C_j^k}(S^k_\lambda)-1\right).\]
	On the other hand, we also have
	\[
	\mathbf{M}^\lambda_j=\mathrm{mult}_{C_j}(S_\lambda) \geq \mathrm{mult}_{C^1_j}(S_\lambda^1) \geq \cdots \geq \mathrm{mult}_{C^{\mathbf{m}_j-1}_j}(S_\lambda^{\mathbf{m}_j-1})  \geq 0.\]
	This proves the following.
	\begin{lemma}[{\cite[Lemma 1.8.5]{ChP18}}]
\label{lem:multiplicity}
Fix $\lambda\in\mathbb{C}\cup\{\infty\}$ and $j\in\{1,\ldots,r\}$. Then
$$
\mathbf{C}_j^\lambda=\left\{\aligned
&0\ \text{if}\ \mathbf{M}_j^\lambda=1,\\
&\mathbf{m}_j-1\ \text{if}\ \mathbf{M}_j^\lambda\geqslant 2.\\
\endaligned
\right.
$$
	\end{lemma}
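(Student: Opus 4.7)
The plan is to run a straightforward case split on the value of $\mathbf{M}_j^\lambda$ using four ingredients already assembled immediately before the statement: the explicit formula $\mathbf{a}_i^\lambda = \sum_{k=0}^{k(i)-1}(m_k - 1)$, where $m_k := \mathrm{mult}_{C_j^k}(S_\lambda^k)$; the monotonicity $\mathbf{M}_j^\lambda = m_0 \geq m_1 \geq \cdots \geq 0$ of these non-negative integers; the conservation identity $\sum_{k=0}^{\mathbf{m}_j - 1} m_k = \mathbf{m}_j$; and the automatic bound $\mathbf{a}_i^\lambda \geq 0$, which follows because $g^{-1}(\lambda) = \widetilde{S}_\lambda + \sum \mathbf{a}_i^\lambda E_i$ is an effective fiber.

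If $\mathbf{M}_j^\lambda = 1$, monotonicity gives $m_k \leq 1$ for every $k$, so each summand $m_k - 1$ in the discrepancy formula is non-positive, hence $\mathbf{a}_i^\lambda \leq 0$ for every $E_i$ with $\pi(E_i) = C_j$. Combined with non-negativity, this forces $\mathbf{a}_i^\lambda = 0$ on the nose, and therefore $\mathbf{C}_j^\lambda = 0$.

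If $\mathbf{M}_j^\lambda \geq 2$, I would prove $\mathbf{a}_i^\lambda \geq 1$ for every $k(i) \in \{1, \ldots, \mathbf{m}_j - 1\}$ by introducing the cutoff $\ell := \max\{k : m_k \geq 1\}$ (note that $\ell \leq \mathbf{m}_j - 2$: otherwise the $\mathbf{m}_j$ positive integers $m_0, \ldots, m_{\mathbf{m}_j - 1}$ would sum to at least $m_0 + (\mathbf{m}_j - 1) > \mathbf{m}_j$, contradicting the conservation identity). For $k(i) \leq \ell + 1$, the discrepancy is a sum of non-negative terms whose leading term $m_0 - 1$ is already at least $1$. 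For $k(i) \geq \ell + 2$, splitting the partial sum at $\ell$ and substituting $\sum_{k=0}^{\ell} m_k = \mathbf{m}_j$ together with $m_k = 0$ for $k > \ell$ telescopes the expression to $\mathbf{a}_i^\lambda = \mathbf{m}_j - k(i)$, which is at least $1$ since $k(i) \leq \mathbf{m}_j - 1$. All $\mathbf{m}_j - 1$ indices therefore contribute, giving $\mathbf{C}_j^\lambda = \mathbf{m}_j - 1$.

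The real content, and the only place where one could slip, is the conservation law $\sum m_k = \mathbf{m}_j$: it is what ties the local resolution data of the specific fibre $S_\lambda$ to the pencil-theoretic invariant $\mathbf{m}_j$, and it is essential precisely in the subcase where $\widetilde{S}_\lambda$ has ceased to meet the later base curves $C_j^k$. Once that identity is in hand, as established in the preceding analysis of the tower of blowups, the lemma reduces to routine bookkeeping on a non-increasing sequence of non-negative integers.
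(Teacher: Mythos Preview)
Your proof is correct and spells out exactly the argument the paper leaves implicit: immediately before the lemma, the paper records the conservation identity $\sum m_k = \mathbf{m}_j$, the discrepancy formula $\mathbf{a}_i^\lambda = \sum_{k<k(i)}(m_k-1)$, and the monotonicity chain $\mathbf{M}_j^\lambda = m_0 \geq m_1 \geq \cdots \geq 0$, then simply asserts ``This proves the following.'' Your case split on $\mathbf{M}_j^\lambda$ is precisely the bookkeeping that fills in this assertion, and the cutoff index $\ell$ is a clean way to organize the second case.
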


The proof of~\cite[Lemma 1.8.5]{ChP18} implies the following.

\begin{corollary}
If $\mathbf{M}_j^\lambda \geq 3$, then $S_\lambda$ has a multiple component.
\end{corollary}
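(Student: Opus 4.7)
The plan is to argue by contrapositive: assuming $S_\lambda$ is reduced (equivalently, squarefree as a divisor on $\mathbb{P}^3$), I aim to deduce $\mathbf{M}_j^\lambda \leq 2$. My first move is a hyperplane section reduction. Take a general hyperplane $H \subset \mathbb{P}^3$; then $H$ meets $C_j$ transversally at $\deg(C_j)$ smooth points. Because $S_\lambda$ is reduced and $H$ is general, $H \cap S_\lambda$ is a reduced plane quartic of arithmetic genus $3$, with a singular point of multiplicity exactly $\mathbf{M}_j^\lambda$ at each of the $\deg(C_j)$ points of $H \cap C_j$. The classical genus bound for reduced plane curves gives
\[
3 \;=\; p_a(H \cap S_\lambda) \;\geq\; \sum_{p \in H \cap C_j}\binom{m_p}{2} \;\geq\; \deg(C_j)\cdot\binom{\mathbf{M}_j^\lambda}{2},
\]
so if $\mathbf{M}_j^\lambda \geq 3$ then $\deg(C_j) = 1$; in particular, we may assume $C_j$ is a line.

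For the remaining line case, I would revisit the iterated blow-up analysis underlying Lemma~\ref{lem:multiplicity}. Working in local coordinates $(x,u,v)$ at a general point of $C_j$ with $I_{C_j} = (x,u)$, the local equation of $S_\lambda$ lies in $(x,u)^{\mathbf{M}_j^\lambda}$, and its initial form is a homogeneous polynomial of degree $\mathbf{M}_j^\lambda$ in $(x,u)$ (with coefficients in the function ring of $C_j$). Combining the sum formula $\sum_{k=0}^{\mathbf{m}_j - 1}\mathrm{mult}_{C_j^k}(S_\lambda^k) = \mathbf{m}_j$ with the non-increasing nature of the sequence and the fact that the generic member of the pencil has multiplicity exactly one at each step until the $\mathbf{m}_j$-th blow-up, one shows that when $\mathbf{M}_j^\lambda \geq 3$ the tangent cone of $S_\lambda$ along $C_j$ must acquire a repeated linear factor. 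Finally, exploiting the fact that $C_j$, being a base curve, lies in the toric boundary $\{xyzt=0\}$ and that the perturbation $\lambda xyzt$ in $S_\lambda = \{f-\lambda xyzt = 0\}$ contributes only at order one to the $(x,u)$-filtration at a generic point of $C_j$, this repeated factor is inherited from $f$ itself and promotes to a multiple irreducible component of the global divisor $S_\lambda$.

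The main obstacle is the line case: reduced plane quartics with a single point of multiplicity $3$ genuinely exist (for instance rational plane quartics with one ordinary triple point), and abstract reduced quartic surfaces with a triple line exist as well (four planes, three through a common line). Ruling them out in our situation therefore cannot come from pure hyperplane genus bounds and must use the restrictive form $S_\lambda = \{f-\lambda xyzt = 0\}$ together with the toric position of $C_j$; this is precisely what is extracted from the blow-up analysis of the proof of Lemma~\ref{lem:multiplicity}, which is why the corollary is presented as a consequence of that proof rather than of its statement.
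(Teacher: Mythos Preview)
Your hyperplane-section reduction is essentially correct, though the displayed inequality $p_a(H\cap S_\lambda)\geq\sum_p\binom{m_p}{2}$ is only valid when $H\cap S_\lambda$ is irreducible; for a reducible reduced plane quartic one must argue differently (e.g.\ via B\'ezout on the line through two putative triple points), but the conclusion $\deg C_j=1$ survives.

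The substantive gap is in the line case. You assert that the sum formula and monotonicity force the tangent cone of $S_\lambda$ along $C_j$ to have a repeated linear factor; this is false. Take $F=x^3t+(y+z)^3y$, pencil $S_\lambda=\{F=\lambda\,xyzt\}$, and $C_j=\{x=y+z=0\}$. The generic $S_\lambda$ is smooth at a general point of $C_j$ (so the setup of Lemma~\ref{lem:multiplicity} applies), one computes $\mathbf m_j=3$, and $S_0=\{x^3t+(y+z)^3y=0\}$ is an \emph{irreducible reduced} quartic with $\mathbf M_j^0=3$. In slice coordinates $u=x$, $v=y+z$ at $[0:1:-1:t_0]$ the initial form is $t_0u^3+v^3$, which has three \emph{distinct} linear factors, and the multiplicity sequence is $(3,0,0)$, perfectly consistent with the sum formula and monotonicity. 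So neither your repeated-factor step nor the final ``promotes to a multiple component'' step goes through, and in fact the conclusion of the corollary fails outright for this pencil.

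This shows that no proof using only the pencil axioms recorded in Section~\ref{s:CYcptn} can work. The paper does not attempt one: its entire proof is the citation ``the proof of \cite[Lemma~1.8.5]{ChP18} implies the following,'' meaning the statement is an observation extracted from the explicit case analysis of the actual Landau--Ginzburg pencils of Fano threefolds carried out there, not a general theorem about quartic pencils of the stated shape. Your proposal, by contrast, tries to give an intrinsic argument, and the example above explains why that route cannot close without importing further input from \cite{ChP18}.
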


In particular, when $\mathbf{M}_j^\lambda=2$, all multiplicities are $1$ except for the last horizontal divisor, whose multiplicity is $0$. This fact will be used in Lemmas \ref{lemma:1-13}, \ref{lemma:2-5}, \ref{lemma:2-11}.
	
	To compute the number $\mathbf{D}^\lambda_P$, one can partially resolve $P$, which yields new base curves in the base, and apply the same strategy explained above. We omit explicit description and refer to~\cite[Section 1.9, 1.10]{ChP18} for more details.
	
	\subsection{Parameterized Landau--Ginzburg models}
	
	We start by recalling some definitions in ~\cite{DHKOP24}.
	
	\begin{definition}\label{def:parametrized LG}
		Let $X$ be a smooth Fano variety, and $\mathrm{Pic}(X) \cong \mathbb{Z}^{\rho(X)}$ be any choice of basis. We say a Laurent polynomial $f \in \mathbb{C}[a_1^{\pm 1}, \cdots, a_{\rho(X)}^{\pm 1}][x_1^{\pm 1}, \cdots, x_{\dim(X)}^{\pm 1}]$ is a \emph{parameterized toric Landau--Ginzburg model} if for any choice of the complexified divisor class $D=(\alpha_1, \cdots, \alpha_{\rho(X)}) \in \mathrm{Pic}(X)\otimes \mathbb{C}$, the induced parameter specialization $f_{D}=f|_{\{a_i=\exp(-\alpha_i)\}}$ is a toric Landau--Ginzburg model for the pair $(X,D)$. In particular, if $D$ is an anticanonical class, then $f_{D}$ is called an \emph{anticanonical} toric Landau--Ginzburg model.
	\end{definition}
	
	If the anticanonical toric Landau--Ginzburg model $f_{-K_X}$ of a parameterized toric Landau--Ginzburg model $f$ is standard, we call $f$ a \emph{standard} (parameterized) toric Landau--Ginzburg model.
	
	In  \cite{DHKOP24}, it is shown that the deformation of  $f_{-K_X}$ with a Calabi--Yau compactification~\eqref{equation:CCGK-compactification} is unobstructed, and a versal family is obtained by deforming the Minkowski polynomial $f_{-K_X}$ with preserving the Newton polytope of $f_{-K_X}$. One way to do this is to consider the Givental's mirror construction associated with the descriptions of Fano threefolds in \cite{CCGK16} (cf. \cite{Lee24} for toric complete intersections).	
Specifying parameters $a_i$, we can treat $f$ as a family of toric Landau--Ginzburg models.
Therefore, we regard the standard parameterized toric Landau--Ginzburg model $f$ as a versal family of deformations of the standard toric Landau--Ginzburg model $f_{-K_X}$. Since we mainly work with compactified Landau--Ginzburg models, we consider a Calabi--Yau compactification of each member of $f$. 
An element of the resulting family of compactified toric Landau--Ginzburg models 
is called a \emph{deformed Landau--Ginzburg model}, and it is said to be \emph{generic} if it is generic in the analytic sense.

	\section{Proof of Theorem \ref{theorem:any rank rationality}}
\label{section:proof}
	We prove Theorem \ref{theorem:any rank rationality} in this section. The proof is essentially a case-by-case study of the monodromy of the reducible fibers of the Landau--Ginzburg models mirror to a given Fano threefold $X$.
	\subsection{The case $X$ is rational}
\label{subsection:rational}
	Throughout this section, we assume that $X$ is rational. We need to prove that its generic Landau--Ginzburg model has unipotent monodromy around every reducible fiber, whenever such fibers exist. In many cases, verifying unipotency of the monodromy is quite simple due to the following theorem by Griffiths--Landman--Grothendieck.
	
	\begin{theorem}[see \cite{Ka70}]\label{theorem:Landman}
		Let $w\colon Y\to \mathbb{C}$ be a family of projective algebraic varieties with smooth total space $Y$. Then the monodromy around a normal crossing fiber is unipotent if the multiplicity of each its component is $1$.
	\end{theorem}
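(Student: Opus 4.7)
The claim is local in the base, so I would first replace $\mathbb{C}$ by a small punctured disk $\Delta^\ast$ around the critical value $t_0$ and $Y$ by the preimage of a slightly larger disk. Since $Y$ is smooth and $w^{-1}(t_0)$ is a simple normal crossings divisor whose components meet transversally, around any point $p$ where exactly $k$ components meet one can choose analytic coordinates $(z_1,\ldots,z_n)$ on $Y$ so that
$$w(z_1,\ldots,z_n) = z_1^{m_1}\cdots z_k^{m_k},$$
where $m_i$ is the multiplicity of the corresponding component. The hypothesis forces every $m_i = 1$, so locally $w = z_1 \cdots z_k$, i.e.\ we are in a \emph{semistable} situation at every point of the special fiber.

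The second step is to invoke the refined form of the Griffiths--Landman--Grothendieck monodromy theorem (see \cite{Ka70}), which asserts that for any one-parameter degeneration of projective varieties with smooth total space, the monodromy $T$ acting on the cohomology of a nearby smooth fiber satisfies
$$(T^N - \mathrm{Id})^{n+1} = 0,$$
where $n$ is the relative dimension and $N$ is the least common multiple of the multiplicities of the components of the singular fiber. Applied to our situation, the uniform equality $m_i = 1$ forces $N = 1$, and the conclusion $(T - \mathrm{Id})^{n+1} = 0$ is precisely unipotency of $T$.

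The main obstacle in writing this out self-containedly is the quantitative form of the monodromy theorem used above; I would either cite it directly or, if one wants a proof, reproduce one of the standard derivations: Steenbrink's construction of the mixed Hodge structure on the nearby cycles $R\psi\,\mathbb{Q}$ of a semistable degeneration (where the weight filtration coincides, up to a shift, with the monodromy weight filtration and the log of monodromy $N = \log T$ acts as a morphism of type $(-1,-1)$), or equivalently the Clemens--Schmid exact sequence, or Katz's algebraic argument via Turrittin's theorem on formal structure of regular singular connections. In all approaches, the key computational input is the explicit local model $z_1 \cdots z_k = t$ from the previous paragraph, on which the Picard--Lefschetz-type computation of vanishing cycles manifestly yields a unipotent local monodromy.
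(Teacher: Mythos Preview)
The paper does not supply its own proof of this statement; it merely quotes the theorem with a reference to Katz~\cite{Ka70} and uses it as a black box. Your sketch is a correct outline of the standard argument behind that reference: the semistable local model $w=z_1\cdots z_k$ together with the quantitative form of the monodromy theorem $(T^N-\mathrm{Id})^{n+1}=0$ with $N=\mathrm{lcm}(m_i)$ immediately gives unipotency when all $m_i=1$. So there is nothing to compare against in the paper, and your proposal accurately reproduces the content of the cited source.
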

	
	\begin{corollary}
    \label{corollary:snc multiplicity 1 unipotent}
		Let $(Y,w)$ be a compactified Landau--Ginzburg model mirror to $X$. If every reducible fiber of $w$ is simple normal crossings, then the monodromy is unipotent.
	\end{corollary}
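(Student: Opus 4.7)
The plan is to apply Theorem~\ref{theorem:Landman} essentially as a black box, since the SNC hypothesis is precisely tailored to satisfy its assumptions. By convention, ``simple normal crossings'' requires the divisor to be reduced: each irreducible component is smooth, the components meet transversally, and each component appears with multiplicity one. Consequently, the multiplicity-one condition in Theorem~\ref{theorem:Landman} is built into the SNC hypothesis. This matches the dichotomy noted after Theorem~\ref{theorem:rank 1}, where the two obstructions to unipotency are identified as non-reducedness and failure of the normal crossings property; the SNC assumption rules both out simultaneously.

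Concretely, I would first recall from Theorem~\ref{theorem:standard are toric} and diagram~\eqref{equation:CCGK-compactification} that $Y$ is a smooth threefold with $K_Y\sim 0$, and that $w$ extends to a proper morphism $u\colon Z\to\PP^1$ from a smooth compact threefold $Z$ whose fibers over $\CC$ coincide with those of $w$. In particular, $w\colon Y\to\CC$ is a projective family of surfaces with smooth total space, which is precisely the setting of Theorem~\ref{theorem:Landman}.

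Then, for any reducible fiber $w^{-1}(\lambda)=D_1+\cdots+D_k$ satisfying the SNC property, the definition guarantees that each $D_i$ is smooth, that the $D_i$ meet transversally, and that each $D_i$ occurs with multiplicity one in $w^{-1}(\lambda)$. Applying Theorem~\ref{theorem:Landman} immediately yields unipotency of the monodromy around $\lambda$, which is exactly the conclusion of the corollary.

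There is essentially no obstacle: the corollary is a tautological consequence of Theorem~\ref{theorem:Landman} once the conventional meaning of ``simple normal crossings'' is spelled out. The reason to record it separately is practical: it supplies the criterion invoked repeatedly in the case-by-case verifications of Section~\ref{section:proof}, so that whenever one can check via the compactification method of Subsection~\ref{s:CYcptn} that every reducible fiber of $w$ is SNC, unipotency comes for free and one need not compute monodromy directly.
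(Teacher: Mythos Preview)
Your proposal is correct and matches the paper's approach: the corollary is stated without a separate proof precisely because it is an immediate consequence of Theorem~\ref{theorem:Landman}, once one unpacks that simple normal crossings entails reducedness (multiplicity one) and that the total space $Y$ is smooth by construction. Your additional remarks about why the corollary is worth recording are accurate but not part of the proof itself.
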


If we view a compactified Landau--Ginzburg model near a reducible fiber as a semi-stable degeneration of K3 surfaces, the condition that the reducible fiber has simple normal crossings is an open condition. By Corollary \ref{corollary:snc multiplicity 1 unipotent}, once we show that a compactified anticanonical Landau--Ginzburg model has simple normal crossings reducible fibers, it follows that there exists an open analytic neighborhood in the deformation space whose deformed reducible fibers also have simple normal crossings. Hence, in such cases, the unipotency of the monodromy is an open condition (cf. Lemma~\ref{l:isomonodromy}). Except for $X_{3-2}$, we apply this argument to prove Theorem \ref{theorem:any rank rationality} for rational $X$.

	We start from the obvious case.
	\begin{lemma}\label{lem: no component}
		If $h^{1,2}(X)=0$, then Theorem \ref{theorem:any rank rationality} holds.
	\end{lemma}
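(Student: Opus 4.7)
The plan is to deduce the lemma directly from Theorem~\ref{theorem:Hodge number}: I will show that the hypothesis $h^{1,2}(X)=0$ forces the generic Landau--Ginzburg model of $X$ to have no reducible fibers at all, which makes the monodromy clause of Theorem~\ref{theorem:any rank rationality} vacuous and leaves only the iff on rationality, easily read off from the classification of Fano threefolds with $h^{1,2}=0$.

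First, I would apply Theorem~\ref{theorem:Hodge number} to the standard compactified Landau--Ginzburg model $(Y_0,w_0)$ of $X$. The hypothesis $h^{1,2}(X)=0$ immediately yields
$$
\sum_{t\in\CC}(\rho_t-1)\;=\;0,
$$
and since each non-empty fiber of $w_0$ has at least one irreducible component, this forces $\rho_t=1$ for every $t\in\CC$; equivalently, every fiber of $w_0$ is irreducible.

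Next, I would propagate this conclusion from $(Y_0,w_0)$ to the generic deformed Landau--Ginzburg model $(Y,w)$. By~\cite{DHKOP24}, the parameterized Landau--Ginzburg family is an unobstructed versal deformation of $(Y_0,w_0)$ realized by perturbing the coefficients of the underlying Minkowski polynomial. Under such a small analytic deformation of a proper threefold fibration whose general fiber is a K3 surface, an irreducible singular fiber cannot acquire a new component: a reducible fiber may split into several irreducible singular fibers, and isolated singularities may migrate in the base, but no new reducible fiber can appear. Hence every fiber of $w$ is again irreducible, and the monodromy clause of Theorem~\ref{theorem:any rank rationality} is vacuously satisfied for $(Y,w)$.

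The hard part will be making this propagation step fully rigorous. The cleanest route, which I would pursue, is to extend Theorem~\ref{theorem:Hodge number} itself to arbitrary members of the parameterized family rather than only to the anticanonical point. This should follow from the proofs in~\cite{Prz13,PS15,ChP18}, where the Hodge-number formula ultimately reduces to combinatorial data associated with the Newton polytope (and, via Remark~\ref{remark:mutation equivalence}, is already known to be constant within the mutation class); this data is stable under coefficient variation, so the identity $\sum_t(\rho_t-1)=h^{1,2}(X)=0$ applies directly to $(Y,w)$. A local semicontinuity argument for deformations of proper K3 fibrations serves as a fallback if the above extension is not recorded in the literature in the precise form needed.
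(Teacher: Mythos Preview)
Your approach is essentially the paper's: both invoke Theorem~\ref{theorem:Hodge number} to get $\sum_t(\rho_t-1)=0$, conclude every fiber of the standard Landau--Ginzburg model is irreducible, and then use that every Fano threefold with $h^{1,2}=0$ is rational (the paper does this implicitly, since the lemma sits in Subsection~\ref{subsection:rational}, which already assumes rationality).

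Two minor points of divergence are worth flagging. First, the paper's proof additionally checks (via \cite{Prz13,ChP18}) that all the irreducible fibers are \emph{reduced}, so that clause~(A) of Theorem~\ref{theorem:any rank rationality} holds outright; you instead observe that clause~(B) is vacuously true once there are no reducible fibers, which is a legitimate literal reading but skips a check the authors evidently regard as part of the statement. Second, you spend effort propagating irreducibility from the anticanonical to the generic member; the paper's proof of this lemma does not mention this, but the surrounding text (the paragraph before Lemma~\ref{lem:s.n.c case}) handles exactly this kind of openness-under-deformation argument, and the upper-semicontinuity of fiber components you invoke is standard---it is not the ``hard part'' you make it out to be, and the suggested detour through reproving Theorem~\ref{theorem:Hodge number} for all deformed members is unnecessary.
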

	
	\begin{proof}
		By Theorem \ref{theorem:Hodge number}, for a mirror Landau--Ginzburg model of $X$, there is no reducible fiber. Moreover, for those cases, it is straightforward to check that all irreducible fibers are indeed reduced (see ~\cite{Prz13, ChP18}). Therefore, the assertion of Theorem \ref{theorem:any rank rationality} in these cases holds.
	\end{proof}
	
	\begin{lemma}\label{lem:s.n.c case}
		If $h^{1,2}(X) \geq 1$ and $X \neq  X_{3-2}$, then there is a compactified Landau--Ginzburg model $(Y,w)$ for $X$ such that every reducible fiber of $w$ is a simple normal crossings divisor. Therefore, in this case, Theorem  \ref{theorem:any rank rationality} follows from Theorem \ref{theorem:Landman}.
	\end{lemma}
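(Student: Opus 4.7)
The plan is to proceed by a case-by-case analysis over the finite list of rational Fano threefolds $X$ with $h^{1,2}(X)\geq 1$ other than $X_{3-2}$. For each such $X$ I would fix a Minkowski polynomial realizing a standard toric Landau--Ginzburg model together with its parameterized deformation constructed in~\cite{DHKOP24}, and homogenize to obtain a pencil $\mathcal{S}=\{S_\lambda\}$ of quartic surfaces in $\mathbb{P}^3$. The diagram~\eqref{equation:CCGK-compactification} then provides the compactification $u\colon Z\to\mathbb{P}^1$, and the identification of fiber components proceeds through
\[
[u^{-1}(\lambda)]=[S_\lambda]+\sum_{j=1}^{r}\mathbf{C}_j^{\lambda}+\sum_{P\in\Sigma}\mathbf{D}_P^{\lambda}.
\]
By Theorem~\ref{theorem:Hodge number} the total number of extra components over all singular values equals $h^{1,2}(X)$, so in every family only a very short list of values $\lambda_0$ has to be examined.

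For each such $\lambda_0$ the task is to check, after replacing the anticanonical specialization by a generic deformation, three things: (i) every base-curve multiplicity $\mathbf{M}_j^{\lambda_0}$ is at most $2$, so that the corollary following Lemma~\ref{lem:multiplicity} rules out multiple components of $u^{-1}(\lambda_0)$; (ii) along base curves where $\mathbf{M}_j^{\lambda_0}=2$ the chain of blow-ups described after the corollary produces exceptional divisors of multiplicity $1$ only (the sole multiplicity $0$ divisor being the last horizontal one); and (iii) the smooth components obtained meet transversally along smooth curves, with no triple intersections beyond those dictated by a simple normal crossings configuration. Items (i)--(ii) follow from the explicit multiplicity bookkeeping of Subsection~\ref{s:CYcptn}, while (iii) is an open condition in parameter space; a generic deformed Landau--Ginzburg model therefore avoids the bad loci. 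This is exactly the mechanism that, in Example~\ref{example:not unipotent 2-12}, separates the coincident ordinary double point and reducible fiber of the anticanonical model.

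I would organize the case-work in tiers: first the families in which the reducible fiber arises already from $S_{\lambda_0}$ splitting into pieces, where the simple normal crossings property is read off the quartic equation itself; then the families in which every extra component arises from $\mathbf{D}_P^{\lambda_0}$ at isolated base points, where the verification reduces to local analytic models at $P\in\Sigma$; and finally the delicate families in which $\mathbf{M}_j^{\lambda_0}=2$ along a base curve $C_j$. The last tier is the main obstacle, and is what the subsequent Lemmas for $X_{1-13}$, $X_{2-5}$ and $X_{2-11}$ will actually address: there the transversality of $\widetilde{S}_{\lambda_0}$ with the surviving $E_i$ must be checked by an explicit tangent-cone computation at a general point of $C_j$, or at a base point $P\in\Sigma$ lying on $C_j$. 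The exclusion of $X_{3-2}$ enters precisely here, because in that family the constraints force either a multiple component or a non-transverse collision which no tuning of deformation parameters removes; accordingly $X_{3-2}$ has to be handled by a separate argument in Subsection~\ref{subsection:rational} rather than by the simple normal crossings reduction.
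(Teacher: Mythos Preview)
Your overall strategy---case-by-case analysis using the quartic pencil and the multiplicity bookkeeping of Subsection~\ref{s:CYcptn}---matches the paper's. But there is a genuine error in how you have sorted the cases.

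The varieties $X_{1-13}$, $X_{2-5}$ and $X_{2-11}$ are \emph{not rational} (cubic threefold and two of its blow-ups); they do not appear in the scope of this lemma at all. The subsequent Lemmas~\ref{lemma:1-13},~\ref{lemma:2-5},~\ref{lemma:2-11} are in Subsection~\ref{subsection:non-rational} and establish the \emph{opposite} conclusion: that the monodromy around the reducible fiber is strictly quasi-unipotent, via the Picard--Fuchs equation. So your ``last tier'' and your plan to defer to those lemmas points in exactly the wrong direction. The rational families you must actually treat are those listed in the paper's proof: $X_{1-6}$, $X_{1-8}$, $X_{1-9}$, $X_{1-14}$, $X_{2-4}$, $X_{2-7}$, $X_{2-9}$, $X_{2-10}$, $X_{2-12}$, $X_{2-13}$, $X_{2-15}$--$X_{2-19}$, $X_{2-23}$, $X_{2-25}$, $X_{2-28}$, $X_{3-3}$, $X_{3-4}$, $X_{3-6}$, $X_{3-7}$, $X_{3-9}$, $X_{3-14}$, $X_{4-1}$, $X_{4-2}$.

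A secondary point: you propose to pass to a generic deformation in every case and then invoke that simple normal crossings is an open condition. The paper's argument runs the other way: for all but two families ($X_{2-12}$ and $X_{3-3}$) it verifies simple normal crossings directly for the \emph{anticanonical} model, and only then uses openness to propagate to a neighborhood. Your route would require, for each family, exhibiting at least one parameter value where the fiber is simple normal crossings---which is precisely the explicit computation you cannot avoid. Finally, your explanation for excluding $X_{3-2}$ is off: the reducible fiber there is the four-component ``American football'' configuration (not simple normal crossings, no multiple component), and Lemma~\ref{lem:3-2} shows the monodromy \emph{is} unipotent, but by an isomonodromy-plus-Picard--Fuchs argument rather than via Theorem~\ref{theorem:Landman}.
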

	
	Before giving a proof for all cases, we see the case $X_{2-12}$ to give a taste and explain why one needs to consider a generic Landau--Ginzburg model.
	
	\begin{example}\label{eg:2-12}
The variety $X_{2-12}$ is an intersection of $3$ divisors of type $(1,1)$ in $\PP^3\times \PP^3$.
Thus, parameterized Givental's Landau--Ginzburg model can be described as
$$
\left\{
  \begin{array}{ll}
    v_1v_2v_3v_4=b' \\
    v_5v_6v_7v_8=a' \\
    v_1+v_5=1 \\
    v_2+v_6=1 \\
    v_3+v_7=1
  \end{array}
\right.
$$
in $(\CC^{*})^8$ with coordinates $v_1,\ldots,v_8$ and parameters $a',b'\neq 0$,
see, for instance,~\cite[Definition 3.3]{PSh17}.
We have $v_4=\frac{b'}{v_1v_2v_3}$, $v_8=\frac{a'}{v_5v_6v_7}$. Thus
the parameterized Givental's Landau--Ginzburg model is isomorphic to
$$
\left\{
  \begin{array}{ll}
    v_1+v_5=1 \\
    v_2+v_6=1 \\
    v_3+v_7=1
  \end{array}
\right.
$$
in $(\CC^*)^6$ with coordinates $v_1,v_2,v_3,v_5,v_6,v_7$ and superpotential
$$
v_1+v_2+v_3+\frac{b'}{v_1v_2v_3}+v_5+v_6+v_7+\frac{a'}{v_5v_6v_7}.
$$
Let us make the birational change of coordinates
$$
v_1=\frac{1}{x'+1},\ \ v_5=\frac{x'}{x'+1},\ \ v_2=\frac{1}{y'+1},\ \ v_6=\frac{y'}{y'+1},\ \ v_3=\frac{1}{z'+1},\ \ v_7=\frac{z'}{z'+1}.
$$
Then the parameterized Givental's Landau--Ginzburg model is birational to the torus~$(\CC^*)^3$ with superpotential
$$
(x'+1)(y'+1)(z'+1)\left(b'+\frac{a'}{x'y'z'}\right).
$$
By changing variables
$$
x' = x/y,\ \  y' = y/z,\ \ z' = z
$$
and scaling $a=a'/b'$, $b=b'$ together with scaling of the coordinate on the target $\CC$ by $b$, we get the following parameterized toric Landau--Ginzburg model
$$
f_a=\frac{(x+y)(y+z)(z+1)(x+a)}{xyz}.
$$
In particular, the standard toric Landau--Ginzburg model is
$$
f=f_1=\frac{(x+y)(y+z)(z+1)(x+1)}{xyz}.
$$
By following the compactification procedure in \cite{ChP18} (see also Section \ref{s:CYcptn}), regarding the parameter $a$ as a constant, one can construct a family of compactified Landau--Ginzburg models. Consider a pencil of quartic surfaces in $\PP^3$
$$
\mathcal{S}_a=\{(x+y)(y+z)(z+t)(at+x)=\lambda xyzt\}.
$$
For $\lambda \neq \infty$, we have \footnote{Here, we use the handy notation in \cite[Section 1.6]{ChP18} }
\[
\begin{aligned}
	& H_{\{x\}}\cdot S_{a,\lambda}=L_{\{x\}, \{y\}}+L_{\{x\},\{y,z\}}+L_{\{x\},\{z,t\}}+L_{\{x\},\{t\}}, \\
	& H_{\{y\}}\cdot S_{a,\lambda}=L_{\{y\}, \{x\}}+L_{\{y\}, \{z\}}+L_{\{y\}, \{t,z\}}+L_{\{y\}, \{at,x\}},\\
	& H_{\{z\}}\cdot S_{a,\lambda}=L_{\{z\}, \{y\}}+L_{\{z\}, \{x,y\}}+L_{\{z\}, \{t\}}+L_{\{z\}, \{at,x\}},\\
	& H_{\{t\}}\cdot S_{a,\lambda}=L_{\{t\}, \{x,y\}}+L_{\{t\}, \{z,y\}}+L_{\{t\}, \{z\}}+L_{\{t\}, \{x\}}. \\
\end{aligned}
\]
Thus,
\[
\begin{aligned}
	S_{a,\infty} \cdot S_{a,\lambda}=&2L_{\{x\}, \{y\}}2L_{\{y\}, \{z\}}+2L_{\{x\}, \{t\}}+2L_{\{z\},\{t\}} \\
	&+L_{\{x\},\{y,z\}}+L_{\{x\},\{z,t\}}+L_{\{y\}, \{t,z\}}+L_{\{y\}, \{at,x\}} \\
	&+L_{\{z\}, \{x,y\}}+L_{\{z\}, \{at,x\}}+L_{\{t\}, \{x,y\}}+L_{\{t\}, \{z,y\}}
\end{aligned}
\]
These are all base curves of the pencil $\mathcal{S}_{a}$ and the multiplicity of the central fiber $S_{a,0}$ along each base curve is $2$ for $L_{\{x\}, \{y\}}, L_{\{y\}, \{z\}}, L_{\{x\}, \{t\}}, L_{\{z\},\{t\}}$ and $1$ for the rest ones. After resolving the base locus, we have a unique reducible fiber at $\lambda=0$, and four singular points $[\alpha^2:\alpha:1:1/\alpha]$ lying over $\lambda=(\alpha+1)^4$ for $\alpha$ such that $\alpha^4=a$. For example, when $a=1$, then $\alpha=\pm 1, \pm \sqrt{-1}$ so that singular fibers are over $0, 16, (1+i)^4, (1-i)^4$. Note that the point $[1:-1:1:-1]$ over $\lambda=0$ is the intersection of four irreducible components. This implies that when $a=1$, the reducible fiber is not simple normal crossings.

On the other hand, if $a \neq 1$, then there are four singular fibers with isolated singularities, not lying over $\lambda =0$, and one unique reducible fiber over $\lambda=0$, that is simple normal crossings.
While the reducible fiber not being a simple normal crossings divisor for $a=1$ does not necessarily imply that the monodromy around it is non-unipotent, this can be deduced from our parameterized Landau--Ginzburg models. Let $U$ be a small neighborhood of $a=1$ in the parameter space. Consider a small disk $\mathbb{D}$ centered at $0 \in \mathbb{C}$ such that, for each $a \in U$, the map $f_a$ has at most one singular fiber with an isolated singularity over $\mathbb{D}$. Then the monodromy around $\partial \mathbb{D}$ defines a family of endomorphisms $\Phi = {\Phi_a \mid a \in U}$ of the trivial vector bundle over $U \times \mathbb{D}$ whose fiber is $H^{2}(w_a^{-1}(e))$ for $e \in \partial \mathbb{D}$.

Away from $a=1$, the monodromy $\Phi_a$ is strictly quasi-unipotent, since it is the composition of the monodromy around the simple normal crossings fiber and that around a singular fiber with one ordinary double point, the latter having non-unit eigenvalues. This implies that the coefficients of the characteristic polynomials are constant functions on $U \setminus \{1\}$, and hence are constant on all of $U$ by analytic continuation.

For the standard Landau--Ginzburg model $f=f_1$, the quartic closure  $S_{-2}$ of the initial fiber is an irreducible quartic whose singular locus is a union of two lines $L_1 \cup L_2$. There are $6$ fixed singular points and one of them is $P=L_1 \cap L_2$.
By resolving these singularities, no exceptional surface appears, but one exceptional curve $C$ does. While these three curves $L_1, L_2$ and $C$ is of multiplicity $2$, their intersection is non-empty: $L_1 \cap L_2 \cap C \neq \emptyset$. Therefore, the resulting surface is not simple normal crossings. 
	\end{example}
	
	\begin{proof}[Proof of Lemma~\ref{lem:s.n.c case}]
	If a toric Landau--Ginzburg model for a certain family is not mentioned, we take one from~\cite{ChP18} and study the pencil $\mathcal{S}$ and its fibers $S_\lambda$ in the notation of Subsection~\ref{s:CYcptn}. Here, we may abuse notation by writing the resolution of $S_\lambda$ along fixed singular points in the same way. In particular, we show that the reducible fibers are simple normal crossings divisors by applying Lemma \ref{lem:multiplicity}. This analysis must be carried out separately for each case.
	
		\begin{itemize}
			\item $X_{1-6}$. Let us take a toric Landau--Ginzburg model $$\frac{(x+z+1)(x+y+z+1)(z+1)(y+z)}{xyz}$$ from \cite[Table 1]{Prz13}. By following the compactification procedure in \cite{ChP18} (see also Section \ref{s:CYcptn}), we start with a pencil of quartic surfaces in $\mathbb{P}^3$
			\[
			\mathcal{S}=\{(x+z+t)(x+y+z+t)(z+t)(y+z)=\lambda xyzt\}.\]
			For $\lambda \neq \infty$, we have
			\[
			\begin{aligned}
				& H_{\{x\}}\cdot S_{\lambda}=2L_{\{x\}, \{z,t\}}+L_{\{x\}, \{y,z,t\}}+L_{\{x\}, \{y,z\}},  \\
				& H_{\{y\}}\cdot S_{\lambda}=2L_{\{y\}, \{x,z,t\}}+L_{\{y\}, \{z,t\}}+L_{\{y\}, \{z\}},  \\
				& H_{\{z\}}\cdot S_{\lambda}=L_{\{z\}, \{x,t\}}+L_{\{z\}, \{x,y,t\}} +L_{\{z\},\{t\}}+L_{\{z\}, \{y\}},\\
				& H_{\{t\}}\cdot S_{\lambda}=L_{\{t\}, \{x,z\}}+L_{\{t\}, \{x,y,z\}}+L_{\{t\}, \{z\}}+L_{\{t\}, \{y,z\}}.\\
			\end{aligned}
			\]
			        Thus, for $\lambda \in \mathbb{C}$,
			\[
			\begin{aligned}
				S_{\infty} \cdot S_{\lambda}=&2L_{\{x\}, \{z,t\}}+L_{\{x\}, \{y,z,t\}}+L_{\{x\}, \{y,z\}}+2L_{\{y\}, \{x,z,t\}} \\
				&+L_{\{y\},\{z,t\}}+2L_{\{y\},\{z\}}+2L_{\{t\},\{z\}}+L_{\{z\}, \{x,t\}}\\
				&+L_{\{z\}, \{x,y,t\}}+L_{\{t\}, \{x,z\}}+L_{\{t\}, \{x,y,z\}}+L_{\{t\}, \{y,z\}}.
			\end{aligned}
			\]
			These are all base curves of the pencil $\mathcal{S}$ and the multiplicity of the central fiber $S_{0}$ along each base curve is $1$ except $L_1=L_{\{x\}, \{z,t\}}$ and $L_2=L_{\{y\}, \{x,z,t\}}$.
			
			The surface $S_{\lambda}$ has seven fixed singular points,
\begin{align*}
P_1=[0:1:-1:0],\ \  P_2=[0:1:0:0],\ \  P_3=[0:0:1:-1],\ \  P_4=[1:0:-1:0],\\ P_5=[1:0:0:0],\ \  P_6=[1:0:0:-1],\ \  P_7=[1:-1:0:0].
\end{align*}
  Resolution of these singularities introduces new base curves. Among them are two quadric base curves of multiplicity $2$, arising from blow-up of $P_3$ and $P_6$ each, which we denote by $C_3$ and $C_6$. By Lemma \ref{lem:multiplicity}, the curves $L_1, L_2, C_3,$ and $C_6$ each produce an exceptional component in the central fiber. By construction, the resulting surface has simple normal crossings.

			\item $X_{1-8}$. Let us take a toric Landau--Ginzburg model $$\frac{(x+1)(y+1)(z+1)(x+y+z+1)}{xyz}$$ from \cite[Table 1]{Prz13}. In a Calabi--Yau compactification, it has no additional component  other than the (closure of) initial components in the central fiber (see ~\cite[Example 29]{Prz13}). Therefore, the resulting reducible fiber is simple normal crossings.
			\item $X_{1-9}$.  Let us take a toric Landau--Ginzburg model $$\frac{(x+y+z)(x+xz+xy+xyz+z+y+yz)}{xyz}$$ from \cite[Table 1]{Prz13}. Other than the (closure of) initial components, there will be one component appearing with multiplicity one (see ~\cite[Example 30]{Prz13}). It follows that the resulting reducible fiber is simple normal crossings.
			
			\item $X_{1-14}$. Let us take a toric Landau--Ginzburg model $$\frac{(x+1)^2(y+1)^2}{xyz}+z$$ from~\cite[Table 1]{Prz13}. It is mutationally equivalent to $(x+1)(y+1)\left(\frac{1}{xyz}+z\right)$. By~\cite[Theorem 1.2]{PS15} its Calabi--Yau compactification has $3$ components of the unique reducible fiber, and, thus, no additional component other than the (closure of) initial components in the central fiber. Since the compactifications of the initial components are simple normal crossings, this shows that the reducible fiber is simple normal crossings.
			\item $X_{2-4}$: The initial fiber over $\lambda=-7$ is $S_{-7}=H_1+H_2+Q$ where $Q$ is an irreducible quadric surface. The intersection is given by $L_1=H_1 \cap H_2$, $C_1=H_1 \cap Q$, and $C_2=H_2 \cap Q$, where $C_1$ is an irreducible conic and $C_2$ is a union of two lines $L_2 \cup L_3$. Note that $L_1, L_2, L_3$ are singular locus of $S_{-7}$ that is contained in the base locus (with multiplicity $2$). Also, there are four singular points $P_1, P_2, P_3, P_4$, where $P_1=L_1 \cap L_2$, $P_2=L_2 \cap L_3$, $P_3=L_1 \cap L_3$, and $P_4$ is an ordinary double point.
			
			By resolving these singularities, we obtain two exceptional surfaces $E_1$ and $E_3$ from the blow-up of $P_1$ and $P_3$, respectively. Let $C_1 =S_{-7} \cap E_1$ and $C_3=S_{-7} \cap E_3$. Also, when blowing up $P_2$, no exceptional surface appears, but one base curve $C_2$ appears. Then $S_{-7}+E_1+E_2$ becomes a simple normal crossings divisor. The base locus is now given by a simple normal crossings union of $17$ rational curves where six of them $L_1, L_2, L_3, C_1, C_2, C_3$ lying in the singular locus of $S_{-7}+E_1+E_2$. They all have multiplicity $2$, hence each of them produces exceptional surface. Therefore, the resulting surface has simple normal crossings.
			
			\item $X_{2-7}$. The initial fiber over $\lambda=-5$ is $S_{-5}=Q_1+Q_2$, where $Q_1$ is a smooth quadric and $Q_2$ is a singular quadric with a single isolated singularity, which is a fixed singular point to be resolved and is also away from the intersection locus. The intersection locus is a union of two conics $C_1$ and $C_2$ and $C_1 \cap C_2=\{P_1, P_2\}$ are the singular locus. There are other singularities which does not affect our calculation.
			
			By resolving singular points $P_1$ and $P_2$, no exceptional surface appears, but two base curves $C_3$ and $C_4$ does, respectively. Among other base curves, $C_1, C_2, C_3, C_4$ have multiplicity $2$, each of which produces an exceptional surface in the pencil.  Therefore, the resulting surface has simple normal crossings.
 
 			\item $X_{2-9}$. The initial fiber over $\lambda=-3$ is $S_{-3}=H_1+H_2+Q$, where $Q$ is a smooth quadric. The intersection $H_1 \cap Q$ is a conic, and $H_2 \cap Q$ is a union of two lines, denoted by $L_1$ and $L_2$. Note that $P=L_1 \cap L_2$ is a singular point of type $\mathbb{D}_4$.
			
			By resolving the singular point $P$, no exceptional surface appears, but one base curve, denoted by $C_1$, does. Note that the strict transforms of $L_1$ and $L_2$ are separated by $C_1$. Among the other base curves, $L_1$, $L_2$, and $C_1$ each have multiplicity 2, and each produces an exceptional surface in the pencil. Therefore, the resulting surface has simple normal crossings.

			\item $X_{2-10}$. There are two singular fibers.
			\begin{enumerate}
				\item At $\lambda=-4$, the initial fiber is $S_{-4}=H+{S}$, where $S$ is a cubic surface with two singular points $P_1$ (ordinary double point) and $P_2$ (type $
				\mathbb{A}_2)$. The intersection is a irreducible cubic which contains $P_1$.
				By resolving these singular points, only one exceptional surface $E_1$ for $P_1$ appears. Then $S_{-4}+E_1$ has simple normal crossings. Since it turns out that there is no base curves with higher multiplicity, the resulting surface in the pencil is $S_{-4}+E_1$.

				\item At $\lambda=-5$, the initial fiber is $S_{-5}=Q_1+Q_2$, where $Q_1$ and $Q_2$ are smooth quadric surfaces. The intersection is an irreducible conic. In this case, we only have two components, so there is nothing much to check.
				
			\end{enumerate}
			\item $X_{2-12}$. See Example~\ref{eg:2-12} for the proof.
			\item $X_{2-13}$. The initial fiber $S_{-3}$ is an irreducible quartic surface with $6$ isolated singularities.			
			By resolving these singularities, one exceptional surface $E$ appears so the strict transform of $S_{-3}$ becomes $S_{-3}+E$. The intersection of $S_{-3} \cap E$ becomes a base curve $C$ with multiplicity $2$. Among all the base curves, this is a unique one with higher multiplicity, which produces an exceptional surface in the pencil. Therefore, the resulting fiber has simple normal crossings.
			
			\item $X_{2-15}$. The initial fiber over $\lambda=-1$ is $S_{-1}=H+{S}$, where ${S}$ is an irreducible cubic surface with three isolated singularities away from ${S} \cap H$. The intersection is a union of three lines $L_1, L_2, L_3$. Let $P=L_1 \cap L_2$. By resolving these singularities, no exceptional surface appears, but one base curve $C$ (corresponding to $P$) does. Since both ${S}$ and $H$ are smooth on $P$, their strict transform pass through the base curve $C$. Among all the base curves, $C, L_1, L_2$ (forming a chain) have multiplicity $2$, each of which produce an exceptional surface in the pencil. Therefore the resulting surface has simple normal crossings.
			
			\item $X_{2-16}$. The initial fiber over $\lambda=-2$ is $S_{-2}=H_1+H_2+Q$, where $Q$ is an irreducible quadric. The intersection $H_1 \cap H_2 \cap Q$ is a set of two points which does not belong to the global singular locus. There are no base curves with higher multiplicities, thus in the pencil, the resulting surface looks like a configuration of American football ball (see Figure \ref{fig:American}). Therefore, the resulting surface has simple normal crossings.
			\begin{figure}
			\begin{tikzpicture}[every node/.style={circle,draw,fill=black,inner sep=0.5pt},
				edge/.style={line width=0.5pt}]
				\node (A) at (0,0) {};
				\node (B) at (3,0) {};
				
				\draw[edge] (A) to[bend left=50]  (B);
				\draw[edge] (A) to[bend left=10]   (B);
				\draw[edge] (A) to[bend right=50] (B);
			\end{tikzpicture}
			\caption{American football ball}
			\label{fig:American}
			\end{figure}
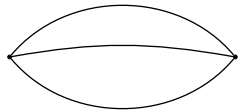
			
			\item $X_{2-17}$. The initial fiber over $\lambda=-2$ is $S_{-2}=H+{S}$,
			where ${S}$ is an irreducible cubic surface with two isolated singularities that lie not over the intersection locus. After a resolution, there are no base curves with higher multiplicities, thus in the pencil, the resulting surface has simple normal crossings.
			
			\item $X_{2-18}$. The computation is almost identical to $X_{2-16}$.
		
			\item $X_{2-19}$. The computation is almost identical to $X_{2-16}$.
			
			\item $X_{2-23}$. The computation is almost identical to $X_{2-17}$.

			\item $X_{2-25}$. The computation is almost identical to $X_{2-10}$ at $\lambda=-5$.

			\item $X_{2-28}$. The computation is almost identical to $X_{2-17}$.

			\item $X_{3-2}$.  We treat this case separately in Lemma~\ref{lem:3-2} below, because the reducible fiber of the Calabi--Yau compactification of its standard Landau--Ginzburg model is not simple normal crossings. In fact, the fiber of Calabi--Yau compactification
over $\lambda=0$ looks like a configuration of American football ball, where all $4$ components $\mathsf{S}_1, \mathsf{S}_2, \mathsf{S}_3, \mathsf{S}_4$ meet at two distinct points, and $\mathsf{S}_i \cap \mathsf{S}_{i+1} = \mathbb{P}^1$ for $i=1,2,3,4$ and $\mathsf{S}_5=\mathsf{S}_1$. 
			
			\item $X_{3-3}$. We first observe that the standard Landau--Ginzburg model in ~\cite[Section 3.3]{ChP18} has a reducible fiber which is not simple normal crossings.
			
			The initial fiber $S_{-4}$ is an irreducible quartic surface with three singularities. There are two base curves $L_1$ and $L_2$ lying in $S_{-4}$ and $L_1 \cap L_2$ is one of these three singularities. Resolving this singularity, one exceptional surface $E$ appears so that the strict transform of $S_{-4}$ becomes $S_{-4}+E$. Also, strict transforms of $L_1$ and $L_2$ are separated and intersect with  $E$ at a point and  $L_1$ and $L_2$ are the base curves with multiplicity $2$, hence produces one exceptional surface each. However, the intersection $S_{-4} \cap E$ is a union of a non-reduced conic (double line) and a line.  Therefore, the resulting fiber is \emph{not} simple normal crossings.

Note that the variety $X_{3-3}$ can be described as a divisor of type $(1,1,2)$ on $\PP^1\times \PP^1\times \PP^2$.
    The standard birational change of variables and shifting of the coordinate on the base for Givental's Landau--Ginzburg model (cf. Example~\ref{eg:2-12}) leads to the parameterized toric Landau--Ginzburg model
			$$
			(p+q+r+1)\left(\frac{a}{p}+\frac{b}{q}+\frac{c(p+q+r+1)}{r}\right) \in\CC_{a,b,c}[p^{\pm 1},q^{\pm 1}, q^{\pm 1}]
			$$
			for $a,b,c \neq 0$. Changing coordinates and parameters and scaling coordinate on the base, one may consider the parameterized toric Landau--Ginzburg model
			$$f_{a,b}= (x+y+z+1)\left(\frac{a}{x}+\frac{b}{y}+\frac{x+y+z+1}{z}\right) \in\CC_{a,b}[x^{\pm 1},y^{\pm 1}, z^{\pm 1}].
			$$
			By applying the compactification procedure in \cite{ChP18} (see also Section \ref{s:CYcptn}), regarding $a,b\neq 0$ as constants, one can construct a family of compactified Landau--Ginzburg models. Consider a pencil of quartic surfaces in $\mathbb{P}^3$
			\[
			\mathcal{S}_{a,b}=\{(x+y+z+t)(ayzt+bxzt+xy(x+y+z+t))=\lambda xyzt\}.\]
			For $\lambda \neq \infty$, we have
			\[
			\begin{aligned}
				& H_{\{x\}}\cdot S_{a, b,\lambda}=L_{\{x\}, \{y,z,t\}}+L_{\{x\}, \{y\}}+L_{\{x\}, \{z\}}+L_{\{x\}, \{t\}},  \\
				& H_{\{y\}}\cdot S_{a, b,\lambda}=L_{\{y\}, \{x,z,t\}}+L_{\{y\}, \{x\}}+L_{\{y\}, \{z\}}+L_{\{y\}, \{t\}},  \\
				& H_{\{z\}}\cdot S_{a, b,\lambda}=2L_{\{z\}, \{x,y,t\}}+L_{\{z\}, \{x\}}+L_{\{z\}, \{y\}},\\
				& H_{\{t\}}\cdot S_{a, b,\lambda}=2L_{\{t\}, \{x,y,z\}}+L_{\{t\}, \{x\}}+L_{\{t\}, \{y\}}. \\
			\end{aligned}
			\]
			Thus,
			\[
			\begin{aligned}
				S_{a,b,\infty} \cdot S_{a,b,\lambda}=&L_{\{x\}, \{y,z,t\}}+L_{\{y\}, \{x,z,t\}}+2L_{\{z\}, \{x,y,t\}}+2L_{\{t\}, \{x,y,z\}} \\
				&+2L_{\{x\},\{t\}}+2L_{\{y\},\{t\}}+2L_{\{x\},\{y\}}+2L_{\{x\},\{z\}}+2L_{\{y\},\{z\}}.
			\end{aligned}
			\]
			These are all base curves of the pencil $\mathcal{S}_{a,b}$ and the multiplicity of the central fiber $S_{a,b,0}$ along each base curve is $1$ except for $L_{\{z\}, \{x,y,t\}}$ and $L_{\{t\}, \{x,y,z\}}$. For simplicity, we write $L_1=L_{\{z\}, \{x,y,t\}}$ and $L_2=L_{\{t\}, \{x,y,z\}}$.
			
			We have two initial components $S_{a,b,0}=H+S$, where $H=\{x+y+z+t=0\}$ and $S=\{ayzt+bxzt+xy(x+y+z+t)=0\}$. Note that $S$ is an irreducible cubic with 4 isolated singularities $[0:0:1:0]$, $[0:0:0:1]$, $[0:1:0:-1]$, and $[1:0:0:-1]$. They are all fixed singular points, and there are $4$ more singular points, that is $[1:-1:0:0]$, $[0:1:-1:0]$, $[1:0:-1:0]$, and $[0:0:1:-1]$.
			
			Moreover, $H \cap S=L_1 \cup L_2 \cup C_{a,b}$ where
			\[
			C_{a,b}=\{x+y+z+t=ay+bx=0\}.\]
			Therefore, if $a \neq b$, then there is no triple intersection point, and in this case one has $[0:0:1:-1]=L_1 \cap L_2$.
			
			From now on, we assume that $a \neq b$. By resolving fixed singular points, no exceptional component appears, but $L_1$ and $L_2$ are separated out by the exceptional line, denoted by $C_3$. Then the strict transform of $S_{a,b,\lambda}$ consists of two components whose intersection locus is the union of $C_3$, $L_1$, $L_2$, and $C_{ab}$, where $S \cap C_{ab}=L_1 \cap L_2= \emptyset$ and it forms a wheel. Since the multiplicity of $L_1$ and $L_2$, after blowing them up, we get $2$ extra components. Therefore, the resulting surface has simple normal crossings.
			
			If $a=b$, the exceptional line $C_3$ becomes a double line, and the situation then resembles the compactification of the anticanonical Landau--Ginzburg model. In fact, one can check that there is no collision unlike Example \ref{eg:2-12}, thus the similar argument can be applied to show that the monodromy for $a=b$ is unipotent.
			\item $X_{3-4}$. The computation is almost identical to $X_{2-13}$.
			\item $X_{3-6}$. The computation is almost identical to $X_{2-17}$.
			\item $X_{3-7}$. The computation is almost identical to $X_{2-17}$.
			\item $X_{3-9}$. The initial fiber over $\lambda=-2$ is $S_{-2}=H+{S}$, where $S$ is an irreducible cubic surface with four isolated singularities $P_1, P_2, P_3, P_4$. The intersection $H \cap {S}$ is a union of three lines $L_1, L_2, L_3$ with no triple point. Also, $P_i$ lies in $L_i \setminus L_{j} \cup L_{k}$ for $\{i,j,k\}=\{1,2,3\}$, and $P_4=L_1 \cap L_2$. Resolving these isolated singularities, no exceptional surfaces and base curves appear.
Among all the base curves, $L_1$ and $L_2$ are the ones with multiplicity $2$, each of which produces an exceptional surface in the pencil. Therefore, the resulting surface has simple normal crossings.
			\item $X_{3-14}$. The computation is almost identical to $X_{2-17}$.
			\item $X_{4-1}$. The computation is almost identical to $X_{2-17}$.
			\item $X_{4-2}$. The computation is almost identical to $X_{2-17}$.
		\end{itemize}
	\end{proof}
	
	\begin{lemma}\label{lem:3-2}
		Let $X=X_{3-2}$. A generic Landau--Ginzburg model $(Y,w)$ has unipotent monodromy around every reducible fiber.
	\end{lemma}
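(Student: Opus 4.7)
The strategy parallels the proof for $X_{3-3}$ given in Lemma~\ref{lem:s.n.c case}: since $\rho(X_{3-2}) = 3$, a parameterized Givental-style Landau--Ginzburg model carries two essential parameters $a,b \in \mathbb{C}^{*}$, and the plan is to show that a generic choice of $(a,b)$ resolves the ``football'' incidence at $\lambda = 0$ into a simple normal crossings configuration with multiplicity-one components, so that Theorem~\ref{theorem:Landman} applies.

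Concretely, I would first write down the Givental Landau--Ginzburg model of $X_{3-2}$ in $(\mathbb{C}^{*})^{n}$, eliminate the redundant coordinates using the defining relations, apply the standard birational change of variables as in Example~\ref{eg:2-12}, and rescale the base $\mathbb{C}$ to obtain a parameterized toric Laurent polynomial $f_{a,b}(x,y,z)$ whose anticanonical specialization recovers the standard polynomial producing the reducible fiber $\mathsf{S}_1 + \mathsf{S}_2 + \mathsf{S}_3 + \mathsf{S}_4$ described above. Homogenizing $f_{a,b}$ yields a pencil $\mathcal{S}_{a,b}$ of quartic surfaces in $\mathbb{P}^{3}$, to which the compactification procedure of Subsection~\ref{s:CYcptn} applies. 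I would then enumerate the base curves together with their multiplicities $\mathbf{m}_{j}$ and $\mathbf{M}_{j}^{0}$, and the isolated base points that are singular for a general member of the pencil, exactly as in the case-by-case analyses above.

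The key verification is as follows. At the anticanonical value of $(a,b)$, the four components of the central fiber meet simultaneously at two distinct points because of the coincidence of several base curves passing through the same points of $\mathbb{P}^{3}$; for a generic pair $(a,b)$ these coincidences are broken, so that the four rational surface components pairwise intersect along disjoint smooth rational curves and any remaining triple intersections are transversal. Invoking Lemma~\ref{lem:multiplicity} to account for the multiplicities introduced by the resolution of the base locus, every newly introduced exceptional surface in the central fiber carries multiplicity one, so the strict transform of $S_{a,b,0}$ together with these exceptional surfaces is a simple normal crossings divisor all of whose components have multiplicity one. Theorem~\ref{theorem:Landman} then delivers unipotent monodromy around the reducible fiber, and any other reducible fibers of the generic $w$ can be disposed of by the same local snc analysis.

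The main obstacle is the explicit blow-up calculation that confirms the ``football'' incidence genuinely disperses under a generic perturbation of $(a,b)$, in the same spirit as the dichotomy $a \neq b$ versus $a = b$ used for $X_{3-3}$ in Lemma~\ref{lem:s.n.c case}. Once this separation is verified at the level of base-locus combinatorics, the rest of the argument is a routine enumeration of multiplicities via Lemma~\ref{lem:multiplicity}; no further input is needed, since we only need unipotency for the generic deformation, not for the anticanonical one.
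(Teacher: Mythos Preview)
Your strategy has a genuine gap at exactly the step you flag as the main obstacle: the football incidence does \emph{not} disperse under a generic choice of $(a,b)$. Carrying out the computation you outline, the parameterized pencil is
\[
\mathcal{S}_{a,b}=\{(x+y+t)\bigl(x(ty+z^{2})+bz(x+y+t)(ax+y+t)\bigr)=\lambda xyzt\},
\]
and the central fiber splits as $S_{a,b,0}=H+S$ with $H=\{x+y+t=0\}$ and $S$ an irreducible cubic. The intersection $H\cap S$ is the union of the line $L=L_{\{x\},\{y,t\}}$ and the conic $C=\{x+y+t=ty+z^{2}=0\}$, and the two points $L\cap C$ are the poles of the football. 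The crucial observation is that \emph{none of these loci depend on $a$ or $b$}: the line, the conic, their two intersection points, and all of the fixed singular points one must blow up are cut out by equations in which $a,b$ do not appear. Hence the base loci of the compactification are flat over the parameter space, and after resolving you still obtain the same four-component football for every $(a,b)$. There is no analogue of the $a\neq b$ separation that worked for $X_{3-3}$, and Theorem~\ref{theorem:Landman} never becomes available.

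The paper therefore takes a completely different route. Precisely because the base loci are parameter-independent, Lemma~\ref{l:isomonodromy} applies and the family of compactified Landau--Ginzburg models is isomonodromic around $\{0\}\times\mathbb{D}^{2}_{a,b}$. It then suffices to verify unipotency at the single anticanonical point $a=b=1$. This is done not geometrically but analytically: since $f_{1,1}$ is birational to Givental's model, $I_{f}=\widetilde{I}_{0}^{X_{3-2},0}$, and one can write down the Picard--Fuchs operator explicitly and check that the local monodromy at the reducible fiber has only unit eigenvalues. So the logic is the reverse of yours: rather than deforming away from the bad fiber to reach a tractable geometry, one shows the monodromy is constant in the family and then computes it once via periods.
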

	\begin{proof}
		Let us consider the standard toric Landau--Ginzburg model for $X_{3-2}$ 
		$$
		f=(p+q+1)\left(\frac{1}{r}+\frac{r}{q}+\frac{(p+q+1)^2}{pq} \right).
		$$
        One can easily see (cf.~\cite[Example 3.11]{DHKOP24}), that Minkowski Laurent polynomials that are deformations of $f$
and have the same Newton polytope are
		$$
		(p+q+1)\left(\frac{1}{r}+\frac{cr}{q}+b(p+q+1)\left(\frac{ap+q+1}{pq}\right) \right)
		$$
		for complex parameters $a$, $b$, $c$ with $a, b, c\neq 0$.
Since the family of Landau--Ginzburg models of $X$ is two-parameter, each of these Landau--Ginzburg
model has a torus chart that gives a Minkowski Laurent polynomial, and for different
models these polynomials differ (because they have different periods). By~\cite{Prz16}
these polynomials have Calabi--Yau compactifications, and
these compactifications form the deformation family of Landau--Ginzburg models.

After toric change of variables one gets the parameterized toric Landau--Ginzburg model
		$$
		f_{a,b}=(x+y+1)\left(\frac{1}{z}+\frac{z}{y}+b(x+y+1)\left(\frac{ax+y+1}{xy}\right) \right).
		$$
        In particular, $f=f_{1,1}$ after the same coordinate change.
		By following the compactification procedure in \cite{ChP18} (see also Section \ref{s:CYcptn}), regarding $a$ and $b$ as constants, one can construct a family of compactified Landau--Ginzburg models. Consider a pencil of quartic surfaces in $\mathbb{P}^3$
		\[
		\mathcal{S}_{a,b}=\{(x+y+t)(x(ty+z^2)+bz(x+y+t)(ax+y+t)=\lambda xyzt\}.\]
		For $\lambda \neq \infty$, we have
		\[
		\begin{aligned}
			& H_{\{x\}}\cdot S_{a, b,\lambda}=3L_{\{x\}, \{y,t\}}+L_{\{x\},\{z\}}, \\
			& H_{\{y\}}\cdot S_{a, b,\lambda}=C_1+L_{\{y\}, \{x,t\}}+L_{\{y\}, \{z\}},\\
			& H_{\{z\}}\cdot S_{a, b,\lambda}=L_{\{z\}, \{x,y,t\}}+L_{\{z\}, \{x\}}+L_{\{z\}, \{y\}}+L_{\{z\}, \{t\}},\\
			& H_{\{t\}}\cdot S_{a, b,\lambda}=C_2+L_{\{t\}, \{x,y\}}+L_{\{t\}, \{z\}}, \\
		\end{aligned}
		\]
		where $C_1=\{xz+b(x+t)(ax+y+t)=0\}$ and $C_2=\{xz+b(x+y)(ax+y)=0\}$.
		Thus,
		\[
		\begin{aligned}
			S_{a,b,\infty} \cdot S_{a,b,\lambda}=&3L_{\{x\}, \{y,t\}}+L_{\{z\}, \{x,y,t\}}+L_{\{y\}, \{x,t\}}+L_{\{t\}, \{x,y\}}+C_1+C_2 \\
			&+ 2L_{\{z\},\{t\}}+2L_{\{x\},\{z\}}+2L_{\{y\},\{z\}}.
		\end{aligned}
		\]
		These are all base curves of the pencil $\mathcal{S}_{a,b}$ and the multiplicity of the central fiber $S_{a,b,0}$ along each base curve is $1$ except for $L_{\{x\}, \{y,t\}}$ whose multiplicity is $2$. For simplicity, we denote this curve by $L$.
		
		We have two initial components $S_{a,b,0}=H+{S}$ where $H=\{x+y+t=0\}$ and ${S}=\{x(ty+z^2)+bz(x+y+t)(ax+(y+t))=0\}$ whose intersection is the union of the line $L$ and conic ${C}_{a,b}=\{x+y+t=ty+z^2=0\}$. Here, ${S}$ is an irreducible quadric with isolated singularities at the intersection $L \cap C$. There are $4$ fixed singular points $[0:0:1:0]$, $[0:1:0:-1]$, $[1:-1:0:0]$, and $[1:0:0:-1]$, which are double everywhere and do not lie in the intersection $L \cap C_{a,b}$.
		
		By Lemma \ref{lem:multiplicity}, after blowing up base loci, there are two extra components appearing in the central fiber. Therefore, the resulting surface is a configuration of American football ball as described in the proof of Lemma \ref{lem:s.n.c case}, but with four components. Moreover, the base loci are independent of the parameters $a$ and $b$. Therefore by Lemma \ref{l:isomonodromy}, the family of compactified Landau--Ginzburg models constructed above are isomonodromic around $0 \times \mathbb{D}^2_{a,b}$.
		
		Let $f=f_{1,1}$ be the anticanonical toric Landau--Ginzburg model for $X_{3-2}$.
		Since the pencil given by $f$ is birational to Givental's Landau--Ginzburg model,
		$I_f$ is equal to its periods,
		which, by~\cite[Theorem 0.1]{Gi97} (see also discussion after Corollary 0.4 in~\cite{Gi97})
		is equal to $\widetilde{I}_0^{X_{3-2},0}$. Thus one can write down its Picard--Fuchs equation by~\cite[Theorem 1.2]{Prz07b}.
		(This equation, in another coordinates, is written down in~\cite{fanosearch}.)
		One can check (see again~\cite{fanosearch}) that monodromy around its unique reducible fiber is unipotent, so by Lemma~\ref{l:isomonodromy}
		it is unipotent for generic Landau--Ginzburg model for $X_{3-2}$.
	\end{proof}

	\subsection{The case $X$ is not rational}
\label{subsection:non-rational}
	In this section, we assume that $X$ is not rational. In this case, there are two types of Landau--Ginzburg models; one whose reducible fiber has multiple components and the other whose reducible fiber has no multiple component but not simple normal crossings. For the former case, the strictness of quasi-unipotency follows from Kulikov’s classification~\cite{Kul77}. For the latter case, we compute the monodromy directly and show that the parameterized Landau--Ginzburg model yields an isomonodromic deformation around the reducible fiber.
	
	\begin{proposition}\label{prop:quasi-unipotency}
		Let $\pi\colon\mathfrak{X} \to \Delta$ be a degenerating family of $K3$-surfaces, where $\mathfrak{X}$ is smooth, $K_{\mathfrak{X}}=0$, and $\mathfrak{X}_0=\pi^{-1}(0)$ is non-reduced. Then the monodromy on $H^2(\mathfrak{X}_t)$ around $0$ is strictly quasi-unipotent.
	\end{proposition}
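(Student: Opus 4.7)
The plan is first to establish quasi-unipotency of $T$, and then, by contradiction, to rule out unipotency via Kulikov's classification of K3 degenerations.

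Quasi-unipotency will follow from Griffiths-Landman-Grothendieck (Theorem~\ref{theorem:Landman} together with its local version): I first perform an embedded log-resolution of the pair $(\mathfrak{X},\mathfrak{X}_0)$, which does not change $T$ since all modifications take place over $0\in\Delta$. This produces an SNC central fiber with multiplicities $m_1,\dots,m_k$, and the theorem gives that $T^{\operatorname{lcm}(m_i)}$ is unipotent. Since $\mathfrak{X}_0$ is non-reduced, $\operatorname{lcm}(m_i)\geq 2$, and the remaining content of the statement is that $T$ is itself not unipotent.

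Now suppose for contradiction that $T$ is unipotent. By the monodromy criterion in Mumford-type semistable reduction (the minimal base-change degree needed equals the order of the semisimple part of $T$), unipotency of $T$ means no base change is required: there exists a proper birational morphism $\mathfrak{X}'\to\mathfrak{X}$ over $\Delta$ making the composition $\mathfrak{X}'\to\Delta$ semistable. Kulikov's theorem~\cite{Kul77}, together with the refinement of Persson-Pinkham, then produces a further birational modification $\mathfrak{Y}\to\Delta$ of $\mathfrak{X}'$, still semistable, with $K_{\mathfrak{Y}}=0$ and $\mathfrak{Y}_0$ a reduced SNC divisor of type I, II, or III. Both $\mathfrak{X}$ and $\mathfrak{Y}$ are smooth projective threefolds over $\Delta$ with trivial canonical bundle, birational over $\Delta$; by Kawamata's theorem on $K$-equivalent Calabi-Yau varieties (cited in Remark~\ref{remark:mutation equivalence}), this birational equivalence is an isomorphism in codimension two, i.e., a composition of flops. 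Such modifications preserve both the irreducible components and the multiplicities along them for any codim-$1$ divisor, so the multiplicity profiles of $\mathfrak{X}_0$ and $\mathfrak{Y}_0$ coincide. But $\mathfrak{Y}_0$ is reduced while $\mathfrak{X}_0$ is non-reduced, a contradiction.

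The main technical obstacle I anticipate is the assertion used at the start of the contradiction argument: that unipotent $T$ implies a semistable birational model over $\Delta$ exists without any base change. This sufficiency direction of the monodromy criterion is folklore coming out of the work of Kulikov and Mumford, but pinning down the cleanest citation may require some care. Everything else in the outline — the Kulikov classification into types I/II/III, Kawamata's codimension-two theorem, and multiplicity preservation under flops — is standard and already used elsewhere in this paper. A self-contained alternative would be a direct local nearby-cycle computation showing that a component of multiplicity $m\geq 2$ contributes a non-trivial $e^{2\pi i/m}$-eigenspace of $T$ on $H^{2}(\mathfrak{X}_t)$, but the Kulikov-based route seems cleaner and fits the paper's framework.
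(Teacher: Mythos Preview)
Your proposal is correct and follows essentially the same route as the paper's proof: assume unipotency, pass to a Kulikov model, invoke Kawamata's theorem~\cite{Ka08} via $K$-equivalence, and derive a contradiction from multiplicity preservation under flops. The folklore step you worry about is handled in the paper by a direct citation to~\cite[Theorem~5.3]{Huy16}, which packages the passage from unipotent monodromy to a birational Kulikov model (without base change) in a single statement, so your intermediate detour through Mumford-type semistable reduction is unnecessary.
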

	
	\begin{proof}
		Suppose that the monodromy is unipotent. By ~\cite[Theorem 5.3]{Huy16}, the family $\pi:\mathfrak{X} \to \Delta$ is a birational modification to Kulikov's model. Since the original family and Kulikov's model are $K$-equivalent (because canonical classes
of both of them are trivial), by~\cite[Theorem 1]{Ka08} they should differ by flops only. However, flops does not change multiplicity of the components in the central fiber. This leads to a contradiction.
	\end{proof}

	\begin{lemma}\label{lem:multiple comp}
        Let $X=X_{m-n}$, where
        $m-n$ is one of the following: $1-1$, $1-2$, $1-3$, $1-4$, $1-5$, $1-11$, $1-12$, $2-2$, $2-3$, $2-6$,
        $2-8$, $3-1$. 
Then Theorem~\ref{theorem:any rank rationality} holds for~$X$.
    \end{lemma}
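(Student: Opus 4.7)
The plan is to split the list by Picard rank and, in all cases, reduce to exhibiting a non-reduced component in a fiber of the generic Landau--Ginzburg model, so that Proposition~\ref{prop:quasi-unipotency} applies.

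For the Picard rank one members $X_{1-1}, X_{1-2}, X_{1-3}, X_{1-4}, X_{1-5}, X_{1-11}, X_{1-12}$, the parameter space of the parameterized Landau--Ginzburg model has dimension $\mathrm{rk}(\mathrm{Pic}(X))-1=0$, so the generic Landau--Ginzburg model coincides with the anticanonical one. All seven Fano threefolds in this list are non-rational (see \cite[\S\S 12.2-12.6]{IP}), so Theorem~\ref{theorem:rank 1} immediately yields that the monodromy around the unique reducible fiber is non-unipotent, which is precisely the content of Theorem~\ref{theorem:any rank rationality} for these cases.

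For the higher Picard rank members $X_{2-2}, X_{2-3}, X_{2-6}, X_{2-8}, X_{3-1}$, we would proceed case-by-case, in the spirit of the proof of Lemma~\ref{lem:s.n.c case}. For each such $X$, we write down the standard parameterized toric Landau--Ginzburg model $f_{\mathbf{a}}$ from \cite{CCGK16,ChP18,DHKOP24}, pass to the pencil $\mathcal{S}_{\mathbf{a}}=\{S_{\mathbf{a},\lambda}\}$ of quartic surfaces on $\mathbb{P}^3$ via the compactification procedure of Subsection~\ref{s:CYcptn}, and compute the multiplicities $\mathbf{M}_j^{\lambda}$ along the base curves using Lemma~\ref{lem:multiplicity} and its corollary. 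The key feature to verify is that, at some $\lambda_0\in\mathbb{C}$, the fiber $S_{\mathbf{a},\lambda_0}$ carries a factor of shape $(u+v)^k$ with $k\geq 2$ coming from a face of the Newton polytope of $f_{\mathbf{a}}$, and that after resolving the base locus this produces a component of the compactified fiber of multiplicity $\geq 2$. Since the Newton polytope is preserved by the parameterization, this non-reducedness persists for the generic Landau--Ginzburg model.

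Once such a non-reduced component is exhibited, Proposition~\ref{prop:quasi-unipotency} implies that the monodromy on $H^{2}$ around the relevant fiber is strictly quasi-unipotent, hence not unipotent. Combined with the non-rationality of these Fano threefolds (see \cite[\S\S 12.2-12.6]{IP}), this gives Theorem~\ref{theorem:any rank rationality} for each of the five cases. The main obstacle is the case-by-case bookkeeping of the base locus of $\mathcal{S}_{\mathbf{a}}$ and the verification that the multiplicity does survive the resolution rather than collapsing to a reduced component after blow-up; this is routine but must be carried out explicitly for each family, and the parameterized version of the computation is what distinguishes this argument from the purely anticanonical analysis already present in the literature.
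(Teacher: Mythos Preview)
Your strategy---exhibit a non-reduced fiber in the generic Landau--Ginzburg model and apply Proposition~\ref{prop:quasi-unipotency}---is exactly the paper's. For the Picard rank one cases your shortcut via Theorem~\ref{theorem:rank 1} is legitimate and saves writing out seven polynomials; the paper instead simply records the Laurent polynomials from \cite{Prz13} (e.g.\ $\frac{(x+y+z+1)^6}{xyz}$ for $X_{1-1}$) and points at the visible high-power factor.

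For the higher-rank cases the paper's execution is considerably simpler than what you outline. It derives each parameterized Laurent polynomial $f_{\mathbf{a}}$ from Givental's construction (using the toric descriptions in \cite{CCGK16}) and observes that, for \emph{all} parameter values, $f_{\mathbf{a}}$ visibly contains a squared factor---for instance $\left(y+\tfrac{b}{x}\right)\left(z+\tfrac{a}{yz}+x+1\right)^2$ for $X_{2-6}$. There is no need to pass to the quartic pencil $\mathcal{S}_{\mathbf{a}}$ or to invoke Lemma~\ref{lem:multiplicity}: a factor $g^2$ in $f_{\mathbf{a}}$ already gives a multiplicity-$2$ component of the torus fiber $f_{\mathbf{a}}^{-1}(0)\subset(\mathbb{C}^*)^3$, and this multiplicity cannot drop under any Calabi--Yau compactification (cf.\ Remark~\ref{remark:mutation equivalence}). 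Your concern that the multiplicity might ``collapse after blow-up'' is therefore misplaced, and the detour through base-curve multiplicities is unnecessary.

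There is one genuine gap in your argument. The sentence ``Since the Newton polytope is preserved by the parameterization, this non-reducedness persists for the generic Landau--Ginzburg model'' is not valid as stated: two Laurent polynomials with the same Newton polytope need not share a repeated factor (compare $(x+1)^2$ with $x^2+3x+1$). What actually makes the proof go through is that the parameterized polynomials, once explicitly computed, still factor with a square in the parameters; this is a case-by-case fact requiring the explicit form of each $f_{\mathbf{a}}$, not a formal consequence of the polytope being fixed. You would discover this when carrying out the computation, but the justification you wrote does not stand on its own.
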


    \begin{proof}
To prove the assertion of the lemma we find a parameterized toric Landau--Ginzburg model that has a factor of multiplicity greater than $1$. Then the claim follows from Theorem~\ref{theorem:standard are toric}, Proposition \ref{prop:quasi-unipotency}, and Remark \ref{remark:mutation equivalence}.
Let us do it case-by-case.

	\begin{itemize}
		\item $X_{1-1}$: $\frac{(x+y+z+1)^6}{xyz}$, see~\cite[Table 1]{Prz13}.
		\item $X_{1-2}$: $\frac{(x+y+z+1)^4}{xyz}$, see~\cite[Table 1]{Prz13}.
		\item $X_{1-3}$: $\frac{(x+1)^2(y+z+1)^3}{xyz}$, see~\cite[Table 1]{Prz13}.
		\item $X_{1-4}$: $\frac{(x+1)^2(y+1)^2(z+1)^2}{xyz}$, see~\cite[Table 1]{Prz13}.
		\item $X_{1-5}$: $\frac{(1+x+y+z+xy+xz+yz)^2}{xyz}$, see~\cite[Table 1]{Prz13}.
		\item $X_{1-11}$: The polynomial $\frac{(x+y+1)^6}{xy^2z}+z$ from~\cite[Table 1]{Prz13} is mutationally equivalent to $(x+y+1)^3\left(\frac{1}{xy^2z}+z\right)$.
		\item $X_{1-12}$: The polynomial $\frac{(x+y+1)^4}{xyz}+z$ from~\cite[Table 1]{Prz13} is mutationally equivalent to $(x+y+1)^2\left(\frac{1}{xyz}+z\right)$. 
		\item $X_{2-1}$:
	The variety $X_{2-1}$ has a toric degeneration to a hypersurface
	in a certain toric variety, see~\cite[H.18]{CCGK16}.
    The standard birational change of variables and shifting the coordinate on the base for Givental's Landau--Ginzburg model (cf. Example~\ref{eg:2-12}) leads to the parameterized toric Landau--Ginzburg model
        \[(p+1)\left(a+\frac{b(q+r+1)^6}{pq^2r}\right)
        \]
        with $a,b\neq 0$. Making the change of variables
$$
p= \frac{(y+z+1)^3}{xy^2z} ,\ \ q=y,\ \ r=z
$$
one gets the parameterized toric Landau--Ginzburg model
        $$
        a+(y+z+1)^3\left(bx+\frac{a+bx(y+z+1)^3}{xy^2z}\right).
$$
The fiber of Calabi--Yau compactification of the initial polynomial over $a$ is not reduced.
		\item $X_{2-2}$: 	The variety $X_{2-2}$ can be described as a hypersurface
	in a certain toric variety, see~\cite[H.19]{CCGK16}.
    The standard birational change of variables and shifting the coordinate on the base for Givental's Landau--Ginzburg model (cf. Example~\ref{eg:2-12}) leads to the parameterized toric Landau--Ginzburg model
		\[\left(1+x+z+\frac{b}{yz}\right)^2\left(y+\frac{ay^2z^2}{x}\left(1+x+z+\frac{b}{yz}\right)\right)
		\]
		for $a , b \neq 0$. 
		\item $X_{2-3}$:
	The variety $X_{2-3}$ has a toric degeneration to a hypersurface
	in a certain toric variety, see~\cite[H.20]{CCGK16}.
    The standard birational change of variables and shifting the coordinate on the base for Givental's Landau--Ginzburg model (cf. Example~\ref{eg:2-12}) leads to the parameterized toric Landau--Ginzburg model
		\[
        a(x+1)+\frac{b(x+1)(y+z+1)^4}{xyz}
        \]
        for $a, b \neq 0$. It is mutationally equivalent to
        \[
        f=a+ax(y+z+1)^2+\frac{b(x(y+z+1)^2+1)(y+z+1)^2}{xyz}.
        \]
        In particular, its fiber over $a$ is not reduced.
        (Another way to check it is given by the calculations from~\cite[Lemma 2.3.7]{ChP18}.)
		\item $X_{2-6}$:
	The variety $X_{2-6}$ can be described as a divisor of type $(2,2)$ in $\PP^2\times \PP^2$.
    The standard birational change of variables and shifting the coordinate on the base for Givental's Landau--Ginzburg model (cf. Example~\ref{eg:2-12}) leads to the parameterized toric Landau--Ginzburg model
		\[\left(y+\frac{b}{x}\right)\left(z+\frac{a}{yz}+x+1\right)^2
		\]
		for $a , b \neq 0$. 
		\item $X_{2-8}$:
	The variety $X_{2-8}$ can be described as a hypersurface
	in a certain toric variety, see~\cite[H.25]{CCGK16}.
    The standard birational change of variables and shifting the coordinate on the base for Givental's Landau--Ginzburg model (cf. Example~\ref{eg:2-12}) leads to the parameterized toric Landau--Ginzburg model
		\[
		b\frac{(x+y+z+1+axyz)^2}{xyz}.
		\]
		for $a , b \neq 0$. 
		\item $X_{3-1}$:
	The variety $X_{3-1}$ can be described as a hypersurface
	in a certain toric variety, see~\cite[H.54]{CCGK16}.
    The standard birational change of variables and shifting the coordinate on the base for Givental's Landau--Ginzburg model (cf. Example~\ref{eg:2-12}) leads to the parameterized toric Landau--Ginzburg model

		\[
		c\frac{(x+y+z+1+\frac{ayz}{x}+byz)^2}{yz}.
		\]
		for $a , b,c \neq 0$. 
	\end{itemize}
\end{proof}

	For the rest of this section, we treat the remaining cases 	$X_{1-7}, X_{1-13}, X_{2-5}$, and $X_{2-11}$ separately. These are the cases in which the reducible fiber has no higher multiplicity components, yet it is not a simple normal crossings divisor. To check that the monodromy is not unipotent, we write down the Picard--Fuchs equation for the corresponding mirror Landau--Ginzburg model, from which we compute the monodromy. Note that this method works for anticanonical Landau--Ginzburg models. Therefore, to apply it to the rank-$2$ cases $X_{2-5}$ and $X_{2-11}$, one would need to write down a parameterized version of the Picard--Fuchs equation.  Unfortunately, the authors are not familiar with this approach. Instead, we provide a more geometric argument to describe the singular locus of the parameterized Landau--Ginzburg model and show that it is isomonodromic around the reducible fiber. 

\begin{lemma}[{cf.~\cite[Theorem 3.3]{KP09}}]
\label{lemma:1-7}
Let $X$ be a general variety from the family $X_{1-7}$, i.\,e. a section of the Grassmannian $G(2,6)$ by five general
hyperplanes.
Consider a standard Landau--Ginzburg model $(Y,w)$ of $X$.
Then $w$ has a unique reducible fiber, and the monodromy around it is not unipotent. Therefore, Theorem \ref{theorem:any rank rationality} holds for $X$.
\end{lemma}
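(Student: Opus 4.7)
The plan is to follow the Picard rank one strategy established in Theorem~\ref{theorem:rank 1} and used in Lemma~\ref{lem:3-2}, adapted to the specific standard Landau--Ginzburg model of $X_{1-7}$. First, I would fix a standard toric Landau--Ginzburg model $f$ for $X_{1-7}$, for instance the Minkowski polynomial listed in \cite{CCGK16} for the degree $14$ Fano threefold, and carry out the Calabi--Yau compactification of Theorem~\ref{theorem:standard are toric} by homogenizing $f$ to a pencil $\mathcal{S}$ of quartic surfaces in $\PP^3$ and resolving its base locus following Subsection~\ref{s:CYcptn}.

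Next, I would compute the initial fiber $S_0$ explicitly and, using Lemma~\ref{lem:multiplicity}, determine all irreducible components of $w^{-1}(0)$, together with the extra exceptional components coming from base curves of multiplicity~$\geq 2$ and from the resolution of the fixed singular points of the pencil. The desired output of this step is a description of the unique reducible fiber $w^{-1}(0)$ with all components of multiplicity one but whose dual complex contains a configuration preventing simple normal crossings (a triple intersection point, or two exceptional components meeting a base curve in the same point). Uniqueness of the reducible fiber is then confirmed via Theorem~\ref{theorem:Hodge number}: the sum of $(\rho_\lambda - 1)$ must equal $h^{1,2}(X_{1-7})$, and this is already exhausted by the components found over $\lambda = 0$.

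Since the Picard rank is one, there is no deformation parameter and no need for an isomonodromy argument; it suffices to establish non-unipotency for the anticanonical model itself. Here I would use the Picard--Fuchs approach that already appears in the proof of Lemma~\ref{lem:3-2}. Because the pencil $\mathcal{S}$ is birational to Givental's Landau--Ginzburg model for $X_{1-7}$ (by \cite[Theorem 0.1]{Gi97}), the main period $I_f$ coincides with $\widetilde{I}_0^{X_{1-7},0}$, so \cite[Theorem 1.2]{Prz07b} yields an explicit Picard--Fuchs ODE. Reading off its local exponents at $\lambda = 0$ (equivalently, looking up the corresponding entry in \cite{fanosearch}), one sees that at least one exponent is a non-integer rational number, so the local monodromy has a non-unit eigenvalue and is strictly quasi-unipotent.

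The main obstacle will be the explicit bookkeeping in the second step: producing a configuration of components precise enough to distinguish this case from the simple normal crossings situation of Lemma~\ref{lem:s.n.c case}, while remaining consistent with the component count forced by Theorem~\ref{theorem:Hodge number}. A secondary technical point is extracting a non-integer exponent from the Picard--Fuchs equation without having to grind through the full $D$-module calculation; consulting the tabulated Picard--Fuchs data from \cite{fanosearch} should make this step essentially mechanical.
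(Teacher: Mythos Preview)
Your overall strategy---compute the Picard--Fuchs operator for the standard toric Landau--Ginzburg model and read off a non-unit eigenvalue at the reducible fiber---is exactly what the paper does, and since the Picard rank is one you are right that no isomonodromy argument is needed.

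Two corrections are in order, though. First, the explicit geometric analysis in your second paragraph (exhibiting a non-simple-normal-crossings configuration via Lemma~\ref{lem:multiplicity}) is superfluous: the paper skips it entirely, because once the local exponents of the Picard--Fuchs operator are known, non-unipotency follows irrespective of the fiber's combinatorics. Second, and more substantively, the citations you invoke for the Picard--Fuchs equation do not apply to $X_{1-7}$. Both \cite[Theorem~0.1]{Gi97} and \cite[Theorem~1.2]{Prz07b} concern complete intersections in toric varieties, whereas $X_{1-7}$ is a codimension-$5$ linear section of the Grassmannian $G(2,6)$ and has no such presentation. The paper instead appeals to \cite[Example~17]{Prz13} for the birational identification with Givental's model and then to \cite[2.2 and Theorem~6.1.1(2)]{Prz07a} and \cite[5.8]{Go05}---results tailored to the genus-$8$ Fano threefold---to obtain the operator
\[
D^3-t(2D+1)(13D^2+13D+4)-3t^2(D+1)(3D+4)(3D+2),
\]
with the unique reducible fiber sitting over $\lambda=4$ rather than $\lambda=0$. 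Your fallback to the tabulated data in \cite{fanosearch} would patch this, but as written the citation chain has a genuine gap.
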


\begin{proof}
By~\cite[Example17]{Prz13}, the standard toric Landau--Ginzburg model
$$
f=5+\frac{(x+y+z+1)^2}{x} + \frac{(x + y + z + 1)(y + z + 1)(z + 1)^2}{xyz}
$$
(cf.~\cite[Example 2.7]{ILP13})
is birational to Givental's Landau--Ginzburg model for $X$, and $I_f$ is its period. On the other hand,
by~\cite[2.2 and Theorem 6.1.1(2)]{Prz07a}, see also~\cite[5.8]{Go05},
the period of the Givental's Landau--Ginzburg model
is a solution of the differential equation
$$
D^3-t(2D+1)(13D^2+13D+4)-3t^2(D+1)(3D+4)(3D+2)\in \CC\left[t,\frac{\partial}{\partial t}\right],
$$
where $D=t\frac{\partial}{\partial t}$.
One can easily see that a Calabi--Yau compactification of $f$ has a unique reducible fiber over $4$.
Eigenvalues of monodromy around $4$ for this equation are not units (this can be seen directly,
see also~\cite{fanosearch}), which gives the assertion of the lemma.
\end{proof}

\begin{lemma}[{cf.~\cite[Theorem 3.3]{KP09}}]
\label{lemma:1-13}
Let $X$ be a general variety from the family $X_{1-13}$, i.\,e. a smooth cubic threefold.
Consider a standard Landau--Ginzburg model $(Y,w)$ of $X$.
Then $w$ has a unique reducible fiber, and the monodromy around it is not unipotent. Therefore, Theorem \ref{theorem:any rank rationality} holds for $X$.
\end{lemma}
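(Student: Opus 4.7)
The plan is to follow verbatim the strategy already used in Lemma~\ref{lemma:1-7}. First, I would fix a standard toric Landau--Ginzburg model $f$ for a general smooth cubic threefold, as recorded in~\cite[Table 1]{Prz13}, and, appealing to~\cite[Theorem 6.1.1]{Prz07a}, identify its main period $I_f$ with the regularized $I$--series $\widetilde{I}_0^{X,-K_X}$ of $X$. By Theorem~\ref{theorem:standard are toric} together with Remark~\ref{remark:mutation equivalence}, it is then enough to analyze the Calabi--Yau compactification of this particular representative, since any Laurent polynomial in the same mutation class produces a compactification birational in codimension two.

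The next step is to write down the Picard--Fuchs operator annihilating $I_f$. By~\cite[2.2]{Prz07a} (see also~\cite[5.8]{Go05}) this is an explicit third-order operator with polynomial coefficients in $t$, and it is tabulated in~\cite{fanosearch}. Its finite singular locus on $\CC$ consists of those $\lambda$ at which $w$ has a singular fiber whose vanishing cycles span a nontrivial part of the transcendental lattice; combining this with Theorem~\ref{theorem:Hodge number} and the identity $h^{1,2}(X_{1-13})=5$, one verifies that, apart from the expected non-reducible singular fibers, there is exactly one reducible fiber.

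The final step is to read off the local monodromy of the Picard--Fuchs operator around this unique reducible fiber from its indicial equation, exactly as was done for $X_{1-7}$ in the proof of Lemma~\ref{lemma:1-7}; the same information can be extracted directly from~\cite{fanosearch}. The eigenvalues turn out not all to be equal to $1$, hence the monodromy is not unipotent. Since a general cubic threefold is classically non-rational by the Clemens--Griffiths theorem (see~\cite[\S12.2]{IP}), this verifies Theorem~\ref{theorem:any rank rationality} for $X_{1-13}$.

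The only step that is not pure bookkeeping is the explicit eigenvalue computation at the unique finite singular point of the Picard--Fuchs operator. However, since the operator itself is classical and the calculation is completely parallel to the one carried out for $X_{1-7}$, I do not expect any genuinely new difficulty; the potential pitfall is only to be careful about the normalization of the base coordinate $t$, so that the singular point in question is correctly identified with the image of the reducible fiber of $w$.
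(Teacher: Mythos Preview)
Your proposal is correct and follows essentially the same approach as the paper: fix the standard toric Landau--Ginzburg model from~\cite[Table~1]{Prz13}, write down the Picard--Fuchs operator for its period, and compute the local exponents at the singular point corresponding to the unique reducible fiber. The paper carries this out explicitly---it records $I_f=\sum\frac{(2k)!(3k)!}{(k!)^5}t^{2k}$, the operator $D^3-12t^2(3D+2)(3D+4)(D+1)$, and then (after the change $z=1/t$, since the reducible fiber at $\lambda=0$ corresponds to $t=\infty$) factors the indicial polynomial as $(y-1)(y-\tfrac{2}{3})(y-\tfrac{4}{3})$---and cites~\cite{KP09} directly for the uniqueness of the reducible fiber rather than inferring it from $h^{1,2}=5$ (which by itself only gives $\sum_\lambda(\rho_\lambda-1)=5$, not that all extra components sit in one fiber).
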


\begin{proof}
By~\cite[Proposition 11]{Prz13} and~\cite[Theorem 2.2]{ILP13}, one can choose a standard toric Landau--Ginzburg model
$$
f=\frac{(x+y+1)^3}{xyz}+z
$$
for $X$, and its Calabi--Yau compactification has a unique reducible fiber (over $0$), see~\cite{KP09}. This fiber consists of $6$ smooth components
forming ``open book configuration'': $3$ components intersect by one smooth curve, while $3$ others
does not intersect each other, but intersect transversally $3$ first ones (as well as their intersection curve), see Figure \ref{fig:openbook}.
\begin{figure}
	\begin{tikzpicture}[scale=1]
		\draw (0,-1) -- (0,3) -- (-2,1) -- (-2,-2) -- cycle;
		\draw (0,-1) -- (0,3) -- (3,3) -- (3,-1) -- cycle;
		\draw (0,2) -- (-1,1) -- (2,2) -- cycle;
		\draw (0,1) -- (-1,0) -- (2,1) -- cycle;
		\draw (0,0) -- (-1,-1) -- (2,0) -- cycle;
		\draw (0,3) -- (2,-2) -- (0,-1) -- cycle;
		\draw (0,2) -- (0.59,1.52);
		\draw (0,1) -- (0.95, 0.65) ;
		\draw (0,0) -- (1.3, -0.23);
	\end{tikzpicture}
\caption{Open Book Figure}
\label{fig:openbook}
\end{figure}
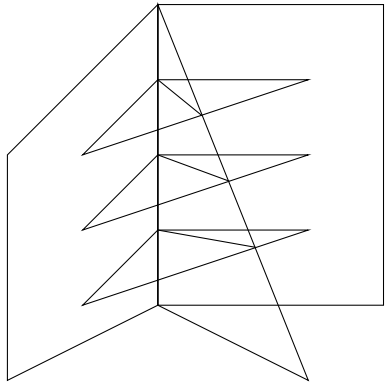
Thus, Corollary~\ref{corollary:snc multiplicity 1 unipotent} is not applicable here.

One can easily see that
$$
I_f=\sum \frac{(2k)!(3k)!}{(k!)^5}t^{2k}.
$$
According to~\cite[Page 1]{Prz07b} and~\cite[Section 2]{Prz08}, see also~\cite[5.8]{Go05}, the series $I_f$
is a solution of a differential equation given by the differential operator
$$
D^3-12t^2(3D+2)(3D+4)(D+1)\in \CC\left[t,\frac{\partial}{\partial t}\right],
$$
where $D=t\frac{\partial}{\partial t}$, which can be directly checked as well. By expanding this operator and normalizing the coefficient of $D^3$, we obtain
$$
D^3-\frac{12\cdot 27t^2}{1-108t^2}D^2-\frac{12\cdot 26t^2}{1-108t^2}D-\frac{12\cdot 8t^2}{1-108t^2}
$$
Since we are interested in the monodromy at $0$, we change the coordinate to $z=1/t$. Up to sign, the equation becomes
$$
\theta^3+\frac{12\cdot 27}{z^2-108}\theta^2-\frac{12\cdot 26}{z^2-108}\theta+\frac{12\cdot 8}{z^2-108} \in \CC\left[z,\frac{\partial}{\partial z}\right],
$$
where $\theta=z\frac{\partial}{\partial z}=-D$.

Let $f_i(z)$ denote the coefficient of $\theta^{i}$. Then the eigenvalues of the monodromy at $0$ is given by $\exp(2\pi\sqrt{-1}\alpha)$'s where $\alpha$ runs over the roots of the following polynomial
$$
y^3+f_2(0)y^2+f_1(0)y+f_0(0) = (y-1)\left(y-\frac{2}{3}\right)\left(y-\frac{4}{3}\right).
$$
Therefore, the monodromy around the central fiber is non-unipotent. (See~\cite[Theorems IV.4.1 and IV.4.2]{CL55}, see also \cite[Theorem 3]{Ve20}; cf.~\cite{fanosearch}, \cite{DHNT20} for the monodromy computation.)
\end{proof}

For the rank-two case, we require the following criterion for isomonodromy.
\begin{lemma}\label{l:isomonodromy}
	Let $g:X \to \mathbb{C}$ be a proper holomorphic map and suppose $0 \in \CC$ is a singular value. Let $(\mathbb{D},o)$ be an analytic disc centered at $o$. Suppose we have a deformation of $g$ over $(\mathbb{D},o)$, $G:\widetilde{X} \to \mathbb{C} \times \mathbb{D}$ such that
	\begin{enumerate}
		\item $\widetilde{X}$ is smooth;
		\item $G|_{G^{-1}(o)}=g$;
		\item $G$ is proper and smooth away from the discriminant locus;
		\item  There exists a smooth function $\eta:\mathbb{D} \to \mathbb{C}$ with $\eta(o)=0$ such that the graph of $\eta$ belongs to the discriminant locus of $G$ and it is diffeomorphic to  $\mathbb{D}$.
	\end{enumerate}
	Then $\pi_\mathbb{D} \circ G:\widetilde{X} \to \mathbb{D}$ is an isomonodromic deformation of $g:X \to \mathbb{C}$ around $0 \in \mathbb{C}$ where $\pi_{\mathbb{D}}$ is the projection to $\mathbb{D}$.
\end{lemma}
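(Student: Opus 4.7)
The plan is to apply Ehresmann's fibration theorem to a smooth family of loops encircling the moving singular value $\eta(a)$ as $a$ varies over $\mathbb{D}$, and thereby identify the monodromy of $\pi_{\mathbb{C}}\circ G(\cdot,a)$ about $\eta(a)$ with the monodromy of $g$ about $0$ uniformly in $a$.

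First I would shrink $\mathbb{D}$ and pick $\varepsilon>0$ so that, for every $a\in\mathbb{D}$, the circle $\gamma_a\subset\mathbb{C}$ of radius $\varepsilon$ about $\eta(a)$ is disjoint from the discriminant locus of $G$. This is where condition (4) plays its role: the graph of $\eta$ is a smoothly embedded closed submanifold diffeomorphic to $\mathbb{D}$, the discriminant of $G$ is closed by properness, and a tubular neighborhood together with a compactness argument on a slightly smaller closed subdisc of $\mathbb{D}$ yields the desired uniform $\varepsilon$.

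Next I would assemble the loops into the smooth real three-manifold
\[
\Gamma \;=\; \bigl\{(z,a)\in\mathbb{C}\times\mathbb{D} \;:\; a\in\mathbb{D},\ z\in\gamma_a\bigr\},
\]
a trivial $S^1$-bundle over the contractible base $\mathbb{D}$. By construction $\Gamma$ avoids the discriminant, so the restriction $G|_{G^{-1}(\Gamma)}\colon G^{-1}(\Gamma)\to\Gamma$ is a proper smooth submersion. Ehresmann's theorem then makes it a smooth locally trivial fibration, and contractibility of $\mathbb{D}$ upgrades this to a global trivialization $G^{-1}(\Gamma)\cong\Phi\times\mathbb{D}$, where $\Phi\to S^1$ is the mapping torus realizing the geometric monodromy of $g$ about $0$.

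Taking higher direct images of the constant sheaf then transports this topological trivialization to a trivialization of the local system $R^{\bullet}G_{*}\underline{\mathbb{Q}}|_{\Gamma}$ along the $\mathbb{D}$-direction; restricting to $\gamma_a$ identifies the monodromy local system of $\pi_{\mathbb{C}}\circ G(\cdot,a)$ about $\eta(a)$ with that of $g$ about $0$, which is exactly what isomonodromy requires. The only step that takes real care is the first one, in which the uniform $\varepsilon$ is produced (one must exploit the fact that the graph of $\eta$ is an entire closed component of the discriminant, so no other branches can accumulate onto it after shrinking $\mathbb{D}$); once this is in hand, the rest is a formal application of Ehresmann's theorem.
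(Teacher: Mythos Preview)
Your proof is correct and follows essentially the same strategy as the paper: both use condition~(4) to find a neighborhood of $(0,o)$ in which the discriminant coincides with the graph of $\eta$, apply Ehresmann's theorem over the complement, and then exploit the contractibility of $\mathbb{D}$ to identify the monodromies for different parameter values. The only cosmetic difference is that the paper phrases the last step as an explicit homotopy $\Omega_s \sim \gamma^{-1}\Omega_o\gamma$ of loops in $U\times\mathbb{D}\setminus\Gamma(\eta)$ (yielding conjugate monodromy representations), whereas you package the same content as a trivialization of the mapping-torus bundle $G^{-1}(\Gamma)\to\mathbb{D}$.
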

\begin{proof}
	By assumption, for a small neighborhood $U$ of $0 \in \mathbb{C}$, the restriction of $G$ over $U\times \mathbb{D} \setminus \Gamma(\eta)$ is a proper submersion. Therefore, we apply Ehresmann's theorem to show that fibers $G^{-1}(p,s)$ for
$$
(p,s) \in U\times \mathbb{D} \setminus \Gamma(\eta)
$$
are diffeomorphic.
	
	Fix $(p,s) \in U \times \mathbb{D}  \setminus \Gamma(\eta)$ and a loop $\Omega_s$ around $(\eta(s),s)$. By the condition $(4)$, we can always find such a based loop. Then we choose a path $\gamma:[0,1] \to U \times \mathbb{D}  \setminus \Gamma(\eta)$ where $\gamma(0)=(p,s)$ and $\gamma(1)=(q,o)$ for some $q$. We also choose a loop $\Omega_o$ starting at $q$ around $(0,o)$. Then we have a  homotopy $\Omega_s \sim \gamma^{-1}\Omega_o\gamma$. In other words,  $[\Omega_s]=[\gamma^{-1}\Omega_o\gamma] \in \pi_1(U\times \mathbb{D} \setminus  \Gamma(\eta), p)$. This implies that both $[\Omega_s]$ and $[\Omega_o]$ induce conjugate monodromy representation, hence the deformation is isomonodromic over $U$.
\end{proof}

\begin{lemma}
	\label{lemma:2-5}
	Let $X$ be a general variety from the family $X_{2-5}$, i.\,e. blow-up of a cubic threefold
	in a plane cubic. Then Theorem \ref{theorem:any rank rationality} holds for $X$.
\end{lemma}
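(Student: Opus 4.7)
The plan is to combine the Picard--Fuchs technique used for $X_{1-13}$ in Lemma~\ref{lemma:1-13} with the isomonodromy criterion of Lemma~\ref{l:isomonodromy}, since $X_{2-5}$ has Picard rank two and the authors' direct calculation of monodromy works only at an anticanonical point. First, I would produce a parameterized toric Landau--Ginzburg model $f_{a,b}$ for $X_{2-5}$ starting from the Givental construction associated with its description as a blow-up of a cubic threefold in a plane cubic, applying the standard birational change of variables and base-coordinate shift (as in Example~\ref{eg:2-12} and Lemma~\ref{lem:3-2}), so that the anticanonical member is recovered by specialization at some point, say $(a,b)=(1,1)$. Next I would run the compactification procedure of Subsection~\ref{s:CYcptn} to obtain a pencil $\mathcal{S}_{a,b}$ of quartic surfaces, describe its base locus $\sum_j \mathbf{m}_j C_j$ together with the fixed singularities, and apply Lemma~\ref{lem:multiplicity} to compute the components and multiplicities in the fiber $S_{a,b,0}$. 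The upshot should be that, for generic $(a,b)$, the unique reducible fiber of $w_{a,b}$ consists only of reduced components (so that Proposition~\ref{prop:quasi-unipotency} does not apply) but fails to be simple normal crossings — in line with the remark at the start of Subsection~\ref{subsection:non-rational} and with the ``open-book'' configuration that arose for $X_{1-13}$.

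Having set up the family, I would specialize to $f=f_{1,1}$ and compute its main period $I_f$; by~\cite[Theorem~0.1]{Gi97} it coincides with the $\widetilde{I}_0^{X_{2-5},0}$ series, so by~\cite[Theorem~1.2]{Prz07b} (or by looking up~\cite{fanosearch}) one obtains an explicit Picard--Fuchs operator. I would then repeat the eigenvalue computation of the proof of Lemma~\ref{lemma:1-13}: change to the coordinate $z=1/t$ centred at the reducible fiber, read off the indicial polynomial, and verify that its roots are not all integers, so that the monodromy around the reducible fiber of the anticanonical Landau--Ginzburg model is strictly quasi-unipotent.

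The final step is to propagate this statement to the generic member of $f_{a,b}$ via Lemma~\ref{l:isomonodromy}. For this I would take $g=w_{1,1}$ with the chosen reducible value $\lambda_0$, and $G$ equal to the restriction of the total family $(\lambda,a,b)\mapsto ((Y_{a,b})_\lambda)$ over a small bidisc $\mathbb{D}^2_{(a,b)}$ around $(1,1)$. The geometric input I would use is that in the pencil $\mathcal{S}_{a,b}$ the base curves and fixed singularities remain stable under perturbation of the coefficients, so the component of the discriminant locus that carries the reducible fiber is cut out by a single equation, linear in $\lambda$ with nonzero derivative, and therefore defines a smooth graph $\lambda=\eta(a,b)$ with $\eta(1,1)=\lambda_0$; one must also verify that no other branch of the discriminant meets this graph at $(1,1)$, which amounts to checking that the four nodal fibers (or their analogues) do not collide with the reducible one for $(a,b)$ in some neighborhood of $(1,1)$. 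Once these conditions are in place, Lemma~\ref{l:isomonodromy} yields an isomonodromic deformation around $\lambda_0$, so the non-unipotent monodromy at the anticanonical point transfers to every $f_{a,b}$ in a full neighborhood, and in particular to the generic Landau--Ginzburg model.

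The main obstacle is the very last step: one must rule out collisions of singular fibers in the parameterized family near $(a,b)=(1,1)$, because only then is the hypothesis on $\eta$ in Lemma~\ref{l:isomonodromy} satisfied and the loop around the reducible fiber conjugate, in the complement of the discriminant, to the loop at the anticanonical point. This is a hands-on geometric check that depends on the precise combinatorics of the base locus of $\mathcal{S}_{a,b}$ for $X_{2-5}$, and it is the only part of the argument that is not a direct template of the rank-one calculation in Lemma~\ref{lemma:1-13}.
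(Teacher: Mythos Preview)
Your plan is correct and matches the paper's approach closely. Two implementation differences are worth noting. First, the paper scales out one parameter at the start (writing $x'=bx$, $a'=ab$, $b'=b$ and dividing by $b$), so the family is one-parameter ($f_a$, with anticanonical point $a=1$) rather than two. Second, your ``main obstacle''---ruling out collisions of singular fibers near the anticanonical point---is handled in the paper not by a local check but by a stronger global observation: in the explicit pencil $\mathcal{S}_a=\{x^2yz+(x+at)(y+z+t)^3=\lambda xyzt\}$ the base curves and the singular locus of $S_{a,0}$ (the line $L_{\{x\},\{y,z,t\}}$) are literally independent of $a$, so the blow-up centers in the compactification procedure are flat over $\mathbb{C}^*_a$ and the reducible fiber sits over $\{0\}\times\mathbb{D}_a$ identically. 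Thus one may take $\eta\equiv 0$ in Lemma~\ref{l:isomonodromy}, and no separate collision analysis for the nodal fibers is required.
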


\begin{proof}
	We first describe a parameterized Landau--Ginzburg model of $X$. Note that $X$ can be described as a hypersurface
	in a certain toric variety, see~\cite[H.22]{CCGK16}.
    The standard birational change of variables (cf. Example~\ref{eg:2-12}) leads to the pa\-ra\-me\-te\-rized toric Landau--Ginzburg model
	$$
	h=x'+\frac{b'(y'+z'+1)^3(x'+a')}{x'y'z'}
	$$
in $(\CC^*)^3$ with coordinates $x',y',z'$ and parameters $a',b'\neq 0$.
	After the change of variables and parameters
	$$
	x'=bx,\ \ y'=y,\ \ z'=z,\ \ a'=ab,\ \ b'=b,
	$$
	one gets
	\begin{equation*}\label{e:2-5 parLG}
		f_a=h/b=x+\frac{(y+z+1)^3(x+a)}{xyz}.
	\end{equation*}
		By following the compactification procedure in \cite{ChP18}, regarding $a\neq 0$ as a constant, one can construct a family of compactified Landau--Ginzburg models. Consider a pencil of quartic surfaces in $\mathbb{P}^3$
	\[
	\mathcal{S}_a=\{x^2yz+(x+at)(y+z+t)^3=\lambda xyzt\}.\]
	For $\lambda \neq \infty$, we have
	\[
	\begin{aligned}
		& H_{\{x\}}\cdot S_{a, \lambda}=3L_{\{x\}, \{y,z,t\}}+L_{\{x\}, \{at\}} , \\
		& H_{\{y\}}\cdot S_{a, \lambda}=3L_{\{y\}, \{z,t\}}+L_{\{y\}, \{x,at\}},  \\
		& H_{\{z\}}\cdot S_{a, \lambda}=3L_{\{z\}, \{y,t\}}+L_{\{z\}, \{x,at\}}, \\
		& H_{\{t\}}\cdot S_{a, \lambda}=L_{\{x\}, \{t\}}+C, \\
	\end{aligned}
	\]
	where $C$ is the cubic given by $\{t=xyz+(y+z)^3=0\}$. These are all base curves of the pencil $\mathcal{S}_a$.
	
	For $\lambda \neq 0, -1, \infty$, the surface ${S}_{a,\lambda}$ has $6$ isolated singularities
	\[
	\begin{aligned}
		&P_1=[0:1:-1:0], \quad P_2=[0:0:1:-1], \quad P_3=[0:1:0:-1], \\
		&P_4=[1:0:0:0], \quad P_5=[\lambda:1:0:-1], \quad P_6=[\lambda:0:1:-1],
	\end{aligned}
	\]
   so that $S_\lambda$ is irreducible. It is enough to check the case $\lambda=0$. The singular locus of $S_{a,0}$ consists of the line $L_{\{x\}, \{y,z,t\}}$. which contains $P_1, P_2, P_3, P_4$, together with $P_5$ and $P_6$ that collapse to $P_3$ and $P_2$, respectively.  Note that this singular locus is independent on $a$, so that in the compactification procedure introduced in Section \ref{s:CYcptn}, the blown-up loci are flat over $\mathbb{C}^*_a$. Therefore, we obtain a family of compactified Landau--Ginzburg model $\widetilde{Y} \to \mathbb{C} \times \mathbb{D}_a$ such that the locus of the reducible fiber is given by $\{0\}\times \mathbb{D}_a$. By Lemma~\ref{l:isomonodromy}, this local family is isomonodromic.
	
	Finally, we show that the monodromy around the central fiber is strictly quasi--unipotent. Let $f=f_1$ be the anticanonical toric Landau--Ginzburg model for $X$. One can see that the period  $I_f$ is equal to periods of Givental's Landau--Ginzburg model for $X$,
	which, by~\cite[Theorem 0.1]{Gi97} (see also discussion after Corollary 0.4 in~\cite{Gi97})
	is equal to $\widetilde{I}_0^{X,0}$. Thus by writing down its Picard--Fuchs equation by~\cite[Theorem 1.2]{Prz07b},
	the similar argument in Lemma \ref{lemma:1-13} implies that the monodromy around the fiber $S_0$ is non-unipotent.
\end{proof}

\begin{lemma}
	\label{lemma:2-11}
	Let $X$ be a general variety from the family $X_{2-11}$, i.\,e. blow-up of a cubic threefold
	in a line. Then Theorem \ref{theorem:any rank rationality} holds for $X$.
\end{lemma}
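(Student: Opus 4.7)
The plan is to follow the strategy of Lemma~\ref{lemma:2-5} essentially verbatim, splitting the argument into a geometric isomonodromy step and a Picard--Fuchs computation. First I would locate a description of $X_{2-11}$ as a hypersurface (or complete intersection) in a toric variety via~\cite{CCGK16}, from which Givental's parameterized Landau--Ginzburg model is immediate. Applying the standard birational change of variables (cf. Example~\ref{eg:2-12}) together with appropriate scalings of the torus coordinates and of the coordinate on the base, this should yield a one-parameter family $f_a \in \mathbb{C}[x^{\pm 1},y^{\pm 1},z^{\pm 1}]$ (with $a \neq 0$) of toric Laurent polynomials specializing at $a = 1$ to the standard toric Landau--Ginzburg model of $X_{2-11}$. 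In view of the description of $X_{2-11}$ as the blow-up of a cubic threefold in a line, one can expect $f_a$ to be closely related to the polynomial $\frac{(x+y+1)^3}{xyz}+z$ from Lemma~\ref{lemma:1-13}.

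Following the compactification procedure of~\cite{ChP18} recalled in Section~\ref{s:CYcptn}, I would homogenize $f_a$ to obtain a pencil of quartic surfaces $\mathcal{S}_a$ in $\mathbb{P}^3$, and then enumerate carefully the base curves and the non-isolated singularities of the central member $S_{a,0}$ using the formulas $H_{\{x\}}\cdot S_{a,\lambda}$, etc., just as in the proof of Lemma~\ref{lemma:2-5}. The key claim to verify is that the base curves of $\mathcal{S}_a$, their multiplicities along $S_{a,0}$, and the non-isolated singular locus of $S_{a,0}$ are all cut out by equations independent of $a$, while the remaining (isolated) singular fibers move with $a$ and stay away from $\lambda = 0$. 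Consequently, the discriminant locus of the resulting family $\widetilde{Y}\to \mathbb{C}\times \mathbb{D}_a$ is, in a neighborhood of $\lambda = 0$, the constant section $\{0\}\times \mathbb{D}_a$, and Lemma~\ref{l:isomonodromy} shows that the family is isomonodromic about the reducible fiber.

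It then suffices to show that the monodromy of the anticanonical Landau--Ginzburg model $f_1$ around its unique reducible fiber is not unipotent. As in Lemmas~\ref{lemma:1-13} and~\ref{lemma:2-5}, the identity $I_{f_1} = \widetilde{I}_0^{X_{2-11},0}$, which follows from~\cite[Theorem 0.1]{Gi97}, allows one to apply~\cite[Theorem 1.2]{Prz07b} and write down the Picard--Fuchs operator explicitly. After changing coordinates to place the reducible fiber at the origin and normalizing, the local exponents at that point can be read off from the characteristic polynomial of the leading coefficients; showing that at least one of them is non-integer (cf.~\cite{fanosearch}, \cite{DHNT20}) yields that the eigenvalues of the monodromy are not all units, exactly as in the cubic threefold case.

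The main obstacle is the geometric step: writing down $f_a$ correctly and verifying by direct enumeration that the base locus and the non-isolated part of $\mathrm{Sing}(S_{a,0})$ are cut out by equations in which $a$ does not appear. This is a delicate but routine computation of the same flavor as those in the proofs of Lemma~\ref{lem:s.n.c case} and Lemma~\ref{lemma:2-5}, made slightly easier here by the fact that we only need parameter-independence of the singular locus rather than a simple normal crossings property. Once isomonodromy is established, the Picard--Fuchs computation at $a = 1$ is essentially the one carried out for the cubic threefold $X_{1-13}$, modified only by the presence of the extra exceptional divisor from the blow-up.
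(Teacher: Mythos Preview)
Your proposal matches the paper's proof essentially step for step: the paper takes the toric description of $X_{2-11}$ from~\cite[H.28]{CCGK16}, obtains the parameterized polynomial
\[
f_a=\frac{a(y+z+1)^3}{xyz}+x+\frac{(y+z+1)^2}{z},
\]
computes the base curves of the associated quartic pencil, observes that the singular locus of $S_{a,0}$ is the line $L_{\{x\},\{y,z,t\}}$ (independent of $a$), applies Lemma~\ref{l:isomonodromy}, and then finishes via the Picard--Fuchs operator at $a=1$ exactly as in Lemma~\ref{lemma:1-13}.

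One small correction to your expectations: in the paper's computation not \emph{all} base curves are cut out independently of $a$; one of them is the conic $C_{a,1}=\{z=ayt+xy+at^2=0\}$. What is actually used is only that the singular locus of the central member $S_{a,0}$ is independent of $a$, so that the blow-up loci in the compactification procedure form a flat family over $\mathbb{C}^*_a$. Your formulation already isolates ``the non-isolated part of $\mathrm{Sing}(S_{a,0})$'' as the crucial object, so this does not affect the argument; just don't expect the full base locus to be $a$-free when you carry out the computation.
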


\begin{proof}
	The proof is analogous to that of Lemma~\ref{lemma:2-5}. We first describe a parameterized Landau--Ginzburg model of $X$.
	Note that the variety $X$ can be described as a hypersurface
	in a certain toric variety, see~\cite[H.28]{CCGK16}.
    The standard birational change of variables, shifting and scaling the coordinate on the base for Givental's Landau--Ginzburg
model (cf. Example~\ref{eg:2-12}) leads to the parameterized toric Landau--Ginzburg model,
	\begin{equation*}\label{e:2-11 parLG}
	f_a=\frac{a(y+z+1)^3}{xyz}+x+\frac{(y+z+1)^2}{z}.
	\end{equation*}

By following the compactification procedure in \cite{ChP18}, regarding $a\neq 0$ as a constant, one can construct a family of compactified Landau--Ginzburg models. Consider a pencil of quartic surfaces
\[
\mathcal{S}_{a}=\{at(y+z+t)^3+x^2yz+(y+z+t)^2xy=\lambda xyzt\}.\]
For $\lambda \neq 0$, we have
\[
\begin{aligned}
	& H_{\{x\}}\cdot S_{a, \lambda}=3L_{\{x\}, \{y,z,t\}}+L_{\{x\}, \{at\}},  \\
	& H_{\{y\}}\cdot S_{a, \lambda}=3L_{\{y\}, \{z,t\}}+L_{\{y\}, \{at\}},  \\
	& H_{\{z\}}\cdot S_{a, \lambda}=2L_{\{z\}, \{y,t\}}+C_{a,1}, \\
	& H_{\{t\}}\cdot S_{a, \lambda}=L_{\{x\}, \{t\}}+L_{\{y\}, \{t\}}+C_{2}, \\
\end{aligned}
\]
where $C_{a,1}$ is the conic given by $\{z=ayt+xy+at^2=0\}$ and $C_{2}$ is the conic given by $\{t=xz+y^2+2yz+z^2=0\}$. These are all base curves of the pencil $\mathcal{S}_a$.
For every $0 \neq \lambda \in \mathbb{C}$, the surface $S_{a,\lambda}$ has three isolated singular points, that is
$$P_1=[0:1:-1:0],\ \  P_2=[1:0:0:0],\ \ P_3=[0:0:1:-1], \ \ P_4=[0:1:0:-1]
$$
and two other moving singular points $P_5=[\lambda:0:-1:1]$ and $P_6=[\lambda:-1:0:1]$, so that $S_\lambda$ is irreducible. When  $\lambda=0$, the singular locus of $S_{a,0}$ consists of the line $L_{\{x\}, \{y,z,t\}}$. Note that this singular locus is independent on $a$, so that in the compactification procedure introduced in Section \ref{s:CYcptn}, the blown-up loci are flat over $\mathbb{C}^*_a$. Therefore, we will obtain a family of compactified Landau--Ginzburg model $\widetilde{Y} \to \mathbb{C} \times \mathbb{D}_a$ such that the locus of the reducible fiber is given by $\{0\}\times \mathbb{D}_a$. By Lemma \ref{l:isomonodromy}, this local family is isomonodromic.
	
Finally, we show that the monodromy around the central fiber is strictly quasi-unipotent. Let $f=f_1$ be the anticanonical toric Landau--Ginzburg model for $X$. One can see that the series $I_f$ is equal to periods of Givental's Landau--Ginzburg model for $X$,
	which, by~\cite[Theorem 0.1]{Gi97} (see also discussion after Corollary 0.4 in~\cite{Gi97})
	is equal to $\widetilde{I}_0^{X,0}$. Thus by writing down its Picard--Fuchs equation by~\cite[Theorem 1.2]{Prz07b},
	the similar argument in Lemma \ref{lemma:1-13} implies that the monodromy around the reducible fiber is non-unipotent.
\end{proof}

\begin{remark}
Lemmas~\ref{lemma:1-13},~\ref{lemma:2-5}, and~\ref{lemma:2-11} show that in addition to fibers with isolated singularities
Landau--Ginzburg models of cubic threefold and its blow-ups in a line and in a plane cubic curve the singular fibers
have open book configurations, and a union of two components for the blow-up in an elliptic curve. This agrees
with Mirror Symmetry expectations of changes of Landau--Ginzburg models under blow-ups of Fano varieties.
\end{remark}

	\section{The table}
\label{section:table}
		We comment on the notation that we use in the following table. In the first column we list families of Fano varieties. For a
given family, in the second column we place anticanonical degree $-K^3_X$ of its member, in the third --- its Hodge number $h^{1,2}(X)$.
In the fourth column we place rationality data of the corresponding Fano variety (one can get it from~\cite[\S\S 12.2-12.6]{IP} and references therein); it is ``$+$'', ``$-$'', or ``gen.'' if all varieties of the family rational, non-rational, or rationality of only a general member of the family is known, correspondingly. In particular, in the families $1.2$ and $1.5$ we only deal with the first description of the corresponding Fano variety given in the column ``Brief description''. Finally, in the last column we refer to the number of a particular lemma where the main result of the paper is proven.
	\label{sec-the-table}
	\small{
		\begin{longtable}{|c|c|c|p{9cm}|c|c|}
			\caption{Rationality of smooth Fano threefolds}\label{table:Fanos}\\
			\hline Family & $-K^3$ &$h^{1,2}$&  Brief description & Rat. & Proof \\
			\hline $1.1$ & $2$ & $52$ & a hypersurface in $\mathbb{P}(1,1,1,1,3)$ of
			degree $6$ & $-$ &  \ref{lem:multiple comp} \\
			\hline $1.2$ & $4$&$30$ & a hypersurface in $\mathbb{P}^4$ of degree
			$4$ or\hfill\break a double cover of smooth quadric in $\mathbb{P}^{4}$ branched over a surface of degree $8$ & $-$ & \ref{lem:multiple comp} \\
			\hline $1.3$ & $6$ &$20$& a complete intersection of a quadric and a cubic in
			$\mathbb{P}^{5}$ & $-$ &  \ref{lem:multiple comp}\\
			\hline $1.4$ & $8$ &$14$& a complete intersection of three quadrics
			$\mathbb{P}^{6}$ & $-$ & \ref{lem:multiple comp}\\
			\hline $1.5$ & $10$ &$10$& a section of
			$\mathrm{Gr}(2,5)\subset\mathbb{P}^9$ by
			quadric and linear subspace of dimension~$7$ or \hfill\break
			a double cover of 1-15 with branch locus an anti-canonical divisor
			& $\mathrm{gen.}-$ &  \ref{lem:multiple comp}  \\
			\hline $1.6$ & $12$ &$7$& a section of the Hermitian symmetric space
			$
G/P\subset\mathbb{P}^{15}$ 
of type DIII  by a
			linear
			subspace of dimension~$8$ & $+$ &  \ref{lem:s.n.c case} \\
			\hline $1.7$ & $14$ &$5$ & a section of
			$\mathrm{Gr}(2,6)\subset\mathbb{P}^{14}$
			by a linear subspace of codimension~$5$ & $-$ &  \ref{lemma:1-7}\\
			\hline $1.8$ & $16$ & $3$& a section of the Hermitian symmetric space
			$
G/P\subset \mathbb{P}^{19}$ 
of type CI  by a linear subspace of dimension~$10$ & $+$ &    \ref{lem:s.n.c case} \\
			\hline $1.9$ & $18$ & $2$& a section of the $5$-dimensional rational
			homogeneous contact 
manifold
			$G_2/P\subset\mathbb{P}^{13}$  by
			a linear subspace of dimension~$11$ & $+$ &   \ref{lem:s.n.c case} \\
			\hline $1.10$ & $22$ &$0$& a zero locus of three sections of the rank
			$3$ vector bundle $\bigwedge^2\mathcal{Q}$, 
where
			$\mathcal{Q}$ is
			the universal quotient bundle on $\mathrm{Gr}(7,3)$ & $+$ &
			 \ref{lem: no component}\\
			\hline $1.11$ & $8$ &$21$& $V_{1}$ that is a hypersurface in
			$\mathbb{P}(1,1,1,2,3)$ of degree $6$ & $-$ &
			 \ref{lem:multiple comp} \\
			\hline $1.12$ & $16$ &$10$& $V_{2}$ that is a hypersurface in
			$\mathbb{P}(1,1,1,1,2)$ of degree $4$ & $-$ &  \ref{lem:multiple comp} \\
			\hline $1.13$ & $24$ &$5$& $V_{3}$ that is a hypersurface in
			$\mathbb{P}^{4}$ of degree $3$ & $-$ &  \ref{lemma:1-13} \\
			\hline $1.14$ & $32$ &$2$& $V_{4}$ that is a complete intersection of two
			quadrics in $\mathbb{P}^{5}$ & $+$ &   \ref{lem:s.n.c case}
			\\
			\hline $1.15$ & $40$ &$0$& $V_{5}$ that is a section of
			$\mathrm{Gr}(2,5)\subset\mathbb{P}^9$ by linear subspace of
			codimension $3$ & $+$ &  \ref{lem: no component}\\
			\hline $1.16$ & $54$ &$0$& $Q$ that is a hypersurface in $\mathbb{P}^{4}$
			of degree $2$ & $+$ &  \ref{lem: no component} \\
			\hline $1.17$ & $64$ &$0$& $\mathbb{P}^{3}$ & $+$ &  \ref{lem: no component}\\
			\hline $2.1$ & $4$ &$22$& a blow-up of the Fano threefold $V_1$ in an
			elliptic curve 
that is an intersection of two divisors
			from $|-\frac{1}{2}K_{V_1}|$ &$-$ &  \ref{lem:multiple comp} \\
			\hline $2.2$ & $6$ &$20$& a double cover of
			$\mathbb{P}^1\times\mathbb{P}^2$
			whose branch locus is a divisor of bidegree $(2, 4)$ & $-$ &  \ref{lem:multiple comp}\\
			\hline $2.3$ & $8$ &$11$& the blow-up of the Fano threefold $V_2$ in an
			elliptic curve 
that is an intersection of two divisors
			from $|-\frac{1}{2}K_{V_2}|$ & $-$ &  \ref{lem:multiple comp}\\
			\hline $2.4$ & $10$ &$10$& the blow-up of $\mathbb{P}^3$ along an
			intersection of two cubics & $+$ &   \ref{lem:s.n.c case}
			\\
			\hline $2.5$ & $12$ &$6$& the blow-up of the threefold
			$V_3\subset\mathbb{P}^4$ along a plane cubic & $-$ &  \ref{lemma:2-5} \\
			\hline $2.6$ & $12$ &$9$& a divisor on
			$\mathbb{P}^2\times\mathbb{P}^2$ of bidegree $(2, 2)$
			or\hfill\break a double cover of $W$ whose branch locus
			is a surface in $|-K_W|$  & $-$ &  \ref{lem:multiple comp}\\
			\hline $2.7$ & $14$ &$5$& the blow-up of $Q$ along the intersection of
			two divisors from $|\mathcal{O}_Q (2)|$ & $+$ &   \ref{lem:s.n.c case}\\
			\hline $2.8$ & $14$ &$9$& a double cover of $V_7$ whose branch locus
			is a surface
			in $|-K_{V_7}|$ & $-$ &   \ref{lem:multiple comp}\\
			\hline $2.9$ & $16$ &$5$& the blow-up of $\mathbb{P}^3$ along a curve
			of degree $7$ and genus~$5$\hfill\break which is an intersection
			of cubics & $+$ &   \ref{lem:s.n.c case} \\
			\hline $2.10$ & $16$ &$3$& the blow-up of $V_4\subset\mathbb{P}^5$
			along an elliptic curve\hfill\break which is an intersection of
			two hyperplane
			sections & $+$ &   \ref{lem:s.n.c case} \\
			\hline $2.11$ & $18$ &$5$& the blow-up of $V_3$ along a line & $-$ &  \ref{lemma:2-11}
			\\
			\hline $2.12$ & $20$ &$3$& the blow-up of $\mathbb{P}^3$ along a curve
			of degree $6$ and genus~$3$\hfill\break which is an intersection
			of cubics & $+$ &   \ref{lem:s.n.c case} \\
			\hline $2.13$ & $20$ &$2$& the blow-up of $Q\subset\mathbb{P}^4$ along
			a curve of degree $6$ and genus $2$ & $+$ &   \ref{lem:s.n.c case}\\
			\hline $2.14$ & $20$ &$1$& the blow-up of $V_5\subset\mathbb{P}^6$ along
			an elliptic curve\hfill\break which is an intersection of two
			hyperplane sections & $+$ &   \ref{lem:s.n.c case} \\
			\hline $2.15$ & $22$ &$4$& the blow-up of $\mathbb{P}^3$ along the
			intersection of a quadric and a cubic surfaces & $+$ &   \ref{lem:s.n.c case} \\
			\hline $2.16$ & $22$ &$2$& the blow-up of $V_4\subset\mathbb{P}^5$
			along a conic & $+$ &  \ref{lem:s.n.c case} \\
			\hline $2.17$ & $24$ &$1$& the blow-up of $Q\subset\mathbb{P}^4$ along
			an elliptic curve of degree~$5$ & $+$ &   \ref{lem:s.n.c case} \\
			\hline $2.18$ & $24$ &$2$& a double cover of
			$\mathbb{P}^1\times\mathbb{P}^2$ whose branch locus is a divisor of
			bidegree $(2, 2)$ & $+$ &   \ref{lem:s.n.c case} \\
			\hline $2.19$ & $26$ &$2$& the blow-up of $V_4\subset\mathbb{P}^5$ along a
			line & $+$ &   \ref{lem:s.n.c case} \\
			\hline $2.20$ & $26$ &$0$& the blow-up of $V_5\subset\mathbb{P}^6$
			along a twisted cubic & $+$ &  \ref{lem: no component}\\
			\hline $2.21$ & $28$ &$0$& the blowup of $Q\subset\mathbb{P}^4$ along
			a twisted quartic & $+$ &   \ref{lem: no component}\\
			\hline $2.22$ & $30$ &$0$& the blow-up of $V_5\subset\mathbb{P}^6$
			along a conic & $+$ &  \ref{lem: no component} \\
			\hline $2.23$ & $30$ &$1$& the blow-up of $Q\subset\mathbb{P}^4$ along a
			curve of degree $4$ that is an intersection of a surface in
			$|\mathcal{O}_{\mathbb{P}^{4}}(1)\vert_{Q}|$ and a surface in
			$|\mathcal{O}_{\mathbb{P}^{4}}(2)\vert_{Q}|$ & $+$ &   \ref{lem:s.n.c case} \\
			\hline $2.24$ & $30$ &$0$& a divisor on $\mathbb{P}^2\times\mathbb{P}^2$
			of bidegree $(1, 2)$ & $+$ &  \ref{lem: no component} \\
			\hline $2.25$ & $32$ &$1$& the blow-up of $\mathbb{P}^3$ along an elliptic
			curve which is an intersection of two quadrics & $+$ &   \ref{lem:s.n.c case} \\
			\hline $2.26$ & $34$ &$0$& the blow-up of the threefold
			$V_5\subset\mathbb{P}^6$ along a line & $+$ & \ref{lem: no component} \\
			\hline $2.27$ & $38$ &$0$& the blow-up of $\mathbb{P}^3$ along a twisted
			cubic  & $+$ &  \ref{lem: no component} \\
			\hline $2.28$ & $40$ &$1$& the blow-up of $\mathbb{P}^3$ along a plane
			cubic & $+$ &   \ref{lem:s.n.c case} \\
			\hline $2.29$ & $40$ &$0$& the blow-up of $Q\subset\mathbb{P}^4$ along a
			conic & $+$ &  \ref{lem: no component}\\
			\hline $2.30$ & $46$ &$0$& the blow-up of $\mathbb{P}^3$ along a conic   &
			$+$ & \ref{lem: no component} \\
			\hline $2.31$ & $46$ &$0$& the blow-up of $Q\subset\mathbb{P}^4$ along a
			line & $+$ & \ref{lem: no component} \\
			\hline $2.32$ & $48$ &$0$& $W$ that is a divisor on
			$\mathbb{P}^2\times\mathbb{P}^2$ of bidegree $(1, 1)$  & $+$ &
			 \ref{lem: no component}\\
			\hline $2.33$ & $54$ &$0$& the blow-up of $\mathbb{P}^3$ along a line  &
			$+$ &  \ref{lem: no component} \\
			\hline $2.34$ & $54$ &$0$& $\mathbb{P}^1\times\mathbb{P}^2$  & $+$ &
			 \ref{lem: no component}\\
			\hline $2.35$ & $56$ &$0$&
			$V_7\cong\mathbb{P}(\mathcal{O}_{\mathbb{P}^2}\oplus\mathcal{O}_{\mathbb{P}^2}(1))$
			& $+$ &  \ref{lem: no component} \\
			\hline $2.36$ & $62$ &$0$&
			$\mathbb{P}(\mathcal{O}_{\mathbb{P}^2}\oplus\mathcal{O}_{\mathbb{P}^2}(2))$
			& $+$ &  \ref{lem: no component}\\
			\hline $3.1$ & $12$ &$8$& a double cover of
			$\mathbb{P}^1\times\mathbb{P}^1\times\mathbb{P}^1$ branched in a
			divisor of type $(2, 2, 2)$ & $-$ & \ref{lem:multiple comp} \\
			\hline $3.2$ & $14$ &$3$& a divisor on a $\mathbb{P}^{2}$-bundle
			$\mathbb{P}(\mathcal{O}_{\mathbb{P}^1\times\mathbb{P}^1}\oplus\mathcal{O}_{\mathbb{P}^1\times\mathbb{P}^1}(-1,-1)\oplus\mathcal{O}_{\mathbb{P}^1\times\mathbb{P}^1}(-1,-1))$
			such that $X\in|L^{{}\otimes
				2}\otimes\mathcal{O}_{\mathbb{P}^{1}\times\mathbb{P}^{1}}(2,3)|$,
			where $L$ is the tautological line bundle & $+$ &   \ref{lem:3-2}  \\
			\hline $3.3$ & $18$ &$3$& a divisor on
			$\mathbb{P}^1\times\mathbb{P}^1\times\mathbb{P}^2$ of type $(1,
			1, 2)$ & $+$ &   \ref{lem:s.n.c case} \\
			\hline $3.4$ & $18$ &$2$& the blow-up of the Fano threefold $Y$ from the
			family $\textnumero\ 2.18$ along a smooth fiber of the
			composition $Y\to\mathbb{P}^1\times\mathbb{P}^2\to\mathbb{P}^2$ of the
			double cover with the projection & $+$ &   \ref{lem:s.n.c case}
			\\
			\hline $3.5$ & $20$ &$0$& the blow-up of $\mathbb{P}^1\times\mathbb{P}^2$
			along a curve $C$ of bidegree $(5, 2)$ 
such that the
			composition  $C\hookrightarrow\mathbb{P}^1\times\mathbb{P}^2\to\mathbb{P}^2$
			is an embedding & $+$ &  \ref{lem: no component} \\
			\hline $3.6$ & $22$ &$1$& the blow-up of $\mathbb{P}^3$ along a disjoint
			union of a line and an elliptic curve of degree~$4$ & $+$ &   \ref{lem:s.n.c case}\\
			\hline $3.7$ & $24$ &$1$& the blow-up of the threefold $W$ along an
			elliptic curve\hfill\break that is an intersection of two  divisors
			from $|-\frac{1}{2}K_W|$  & $+$ &   \ref{lem:s.n.c case}
			\\
			\hline $3.8$ & $24$ &$0$& a divisor in
			$|(\alpha\circ\pi_1)^*(\mathcal{O}_{\mathbb{P}^2}(1))\otimes\pi_2^*(\mathcal{O}_{\mathbb{P}^2}(2))|$,
			where $\pi_{1}\colon\mathbb{F}_1\times\mathbb{P}^2\to\mathbb{F}_1$
			and $\pi_{2}\colon\mathbb{F}_1\times\mathbb{P}^2\to\mathbb{P}^2$ are
			projections, and $\alpha\colon\mathbb{F}_1\to\mathbb{P}^2$ is a blow-up of a point & $+$ & \ref{lem: no component} \\
			\hline $3.9$ & $26$ &$3$& the blow-up of a cone $W_4\subset\mathbb{P}^6$
			over the Veronese surface  $R_4\subset\mathbb{P}^5$ 
with
			center in a disjoint union of the vertex and a quartic on
			$R_4\cong\mathbb{P}^2$ & $+$ &   \ref{lem:s.n.c case} \\
			\hline $3.10$ & $26$ &$0$& the blow-up of $Q\subset\mathbb{P}^4$ along a
			disjoint union of two conics & $+$ &  \ref{lem: no component}\\
			\hline $3.11$ & $28$ &$1$& the blow-up of the threefold $V_7$ along an
			elliptic curve\hfill\break that is an intersection of  two divisors
			from $|-\frac{1}{2}K_{V_7}|$ & $+$ &   \ref{lem:s.n.c case} \\
			\hline $3.12$ & $28$ &$0$& the blow-up of $\mathbb{P}^3$ along a disjoint
			union of a line and a twisted cubic & $+$ &  \ref{lem: no component} \\
			\hline $3.13$ & $30$ &$0$& the blow-up of
			$W\subset\mathbb{P}^2\times\mathbb{P}^2$ along a curve $C$ of
			bidegree $(2, 2)$ 
such that
			$\pi_{1}(C)\subset\mathbb{P}^2$ and
			$\pi_{2}(C)\subset\mathbb{P}^{2}$ are irreducible
			conics, 
where $\pi_{1}\colon W\to\mathbb{P}^2$ and
			$\pi_{2}\colon W\to\mathbb{P}^2$ are
			natural projections & $+$ &  \ref{lem: no component} \\
			\hline $3.14$ & $32$ &$1$& the blow-up of $\mathbb{P}^3$ along a
			disjoint union of a plane cubic curve that is
			contained in a plane
			$\Pi\subset\mathbb{P}^{3}$ and a point that is not contained in $\Pi$
			& $+$ &   \ref{lem:s.n.c case} \\
			\hline $3.15$ & $32$ &$0$& the blow-up of $Q\subset\mathbb{P}^4$ along a
			disjoint union of a line and a conic & $+$ &  \ref{lem: no component} \\
			\hline $3.16$ & $34$ &$0$& the blow-up of $V_7$ along a proper
			transform via the blow-up $\alpha\colon
			V_7\to\mathbb{P}^3$ of a twisted cubic
			passing through the center of the blow-up $\alpha$ & $+$ &  \ref{lem: no component} \\
			\hline $3.17$ & $36$ &$0$& a divisor on
			$\mathbb{P}^1\times\mathbb{P}^1\times\mathbb{P}^2$ of type $(1,
			1, 1)$ & $+$ &  \ref{lem: no component} \\
			\hline $3.18$ & $36$ &$0$& the blow-up of $\mathbb{P}^3$ along a disjoint
			union of a line and a conic & $+$ &  \ref{lem: no component} \\
			\hline $3.19$ & $38$ &$0$& the blow-up of $Q\subset\mathbb{P}^4$ at two
			non-collinear points & $+$ & \ref{lem: no component}\\
			\hline $3.20$ & $38$ &$0$& the blow-up of $Q\subset\mathbb{P}^4$ along a
			disjoint union of two lines & $+$ & \ref{lem: no component} \\
			\hline $3.21$ & $38$ &$0$& the blow-up of $\mathbb{P}^1\times\mathbb{P}^2$
			along a curve of bidegree  $(2, 1)$ & $+$ &  \ref{lem: no component} \\
			\hline $3.22$ & $40$ &$0$& the blow-up of $\mathbb{P}^1\times\mathbb{P}^2$
			along a conic in a fiber of the projection
			$\mathbb{P}^{1}\times\mathbb{P}^2\to\mathbb{P}^1$ & $+$ & \ref{lem: no component} \\
			\hline $3.23$ & $42$ &$0$& the blow-up of $V_7$ along a proper transform
			via the blow-up $\alpha\colon V_7\to\mathbb{P}^3$ of an
			irreducible conic passing through the center of the blow-up $\alpha$ &
			$+$ &  \ref{lem: no component} \\
			\hline $3.24$ & $42$ &$0$& $W\times_{\mathbb{P}^2}\mathbb{F}_1$, where
			$W\to\mathbb{P}^2$ is a $\mathbb{P}^1$-bundle and
			$\mathbb{F}_1\to\mathbb{P}^2$ is the
			blow-up & $+$ &  \ref{lem: no component}\\
			\hline $3.25$ & $44$ &$0$& the blow-up of $\mathbb{P}^3$ along a disjoint
			union of two lines & $+$ & \ref{lem: no component} \\
			\hline $3.26$ & $46$ &$0$& the blow-up of $\mathbb{P}^3$ with center in a
			disjoint union of a point and a line  & $+$ & \ref{lem: no component}
			\\
			\hline $3.27$ & $48$ &$0$&
			$\mathbb{P}^1\times\mathbb{P}^1\times\mathbb{P}^1$  & $+$ & \ref{lem: no component}
			\\
			\hline $3.28$ & $48$ &$0$& $\mathbb{P}^1\times\mathbb{F}_1$  & $+$ & \ref{lem: no component}
			\\
			\hline $3.29$ & $50$ &$0$& the blow-up of the Fano threefold $V_7$
			along a line in $E\cong\mathbb{P}^2$, 
where $E$ is the
			exceptional divisor of the
			blow-up $V_7\to\mathbb{P}^3$  & $+$ &   \ref{lem: no component}\\
			\hline $3.30$ & $50$ &$0$& the blow-up of $V_7$ along a proper transform
			via the blow-up $\alpha\colon V_7\to\mathbb{P}^3$ of a
			line that passes through the center of the blow-up $\alpha$   & $+$ &
			 \ref{lem: no component} \\
			\hline $3.31$ & $52$ &$0$& the blow-up of a cone over a smooth quadric in
			$\mathbb{P}^3$ at the vertex & $+$ &  \ref{lem: no component} \\
			\hline $4.1$ & $24$ &$1$& divisor on
			$\mathbb{P}^1\times\mathbb{P}^1\times\mathbb{P}^1\times\mathbb{P}^1$
			of multidegree $(1, 1, 1, 1)$ & $+$ &    \ref{lem:s.n.c case}\\
			\hline $4.2$ & $28$ &$1$& the blow-up of the cone over a smooth quadric
			$S\subset\nolinebreak\mathbb{P}^3$\hfill\break along a disjoint union
			of the vertex and an elliptic curve on $S$  & $+$ &   \ref{lem:s.n.c case} \\
			\hline $4.3$ & $30$ &$0$& the blow-up of
			$\mathbb{P}^1\times\mathbb{P}^1\times\mathbb{P}^1$ along a curve of
			type $(1, 1, 2)$  & $+$ &  \ref{lem: no component}\\
			\hline $4.4$ & $32$ &$0$& the blow-up of the smooth Fano threefold $Y$
			from the family $\textnumero\ 3.19$ along the proper
			transform of a conic on the quadric $Q\subset\mathbb{P}^4$ 
			that passes through the both centers of the blow-up $Y\to Q$ & $+$ & \ref{lem: no component}
			\\
			\hline $4.5$ & $32$ &$0$& the blow-up of $\mathbb{P}^1\times\mathbb{P}^2$
			along a disjoint union of\hfill\break two irreducible curves of
			bidegree $(2, 1)$ and $(1, 0)$ & $+$ &  \ref{lem: no component}\\
			\hline $4.6$ & $34$ &$0$& the blow-up of $\mathbb{P}^3$ along a disjoint
			union of three lines & $+$ &  \ref{lem: no component} \\
			\hline $4.7$ & $36$ &$0$& the blow-up of
			$W\subset\mathbb{P}^2\times\mathbb{P}^2$ along a disjoint union
			of\hfill\break two curves of bidegree $(0, 1)$ and $(1, 0)$ &+ & \ref{lem: no component}
			\\
			\hline $4.8$ & $38$ &$0$& the blow-up of
			$\mathbb{P}^1\times\mathbb{P}^1\times\mathbb{P}^1$ along a curve of
			type $(0, 1, 1)$  & $+$ & \ref{lem: no component}  \\
			\hline $4.9$ & $40$ &$0$& the blow-up of the smooth Fano threefold $Y$
			from the family
			$\textnumero\ 3.25$ along a curve that is contracted
			by the blow-up $Y \to\mathbb{P}^3$  &  $+$ &  \ref{lem: no component}\\
			\hline $4.10$ & $42$ &$0$& $\mathbb{P}^1\times S_7$  & $+$ & \ref{lem: no component}
			\\
			\hline $4.11$ & $44$ &$0$& the blow-up of
			$\mathbb{P}^1\times\mathbb{F}_1$ along a curve $C\cong\mathbb{P}^1$ such
			that $C$ is contained in a fiber $F\cong\mathbb{F}_{1}$ of
			the projection
			$\mathbb{P}^1\times\mathbb{F}_{1}\to\mathbb{P}^1$ and $C\cdot C=-1$ on
			$F$ & $+$ &  \ref{lem: no component}\\
			\hline $4.12$ & $46$ &$0$& the blow-up of the smooth Fano threefold
			$Y$ from the family $\textnumero\ 2.33$ along two curves that are
			contracted by the blow-up
			$Y\to\mathbb{P}^3$ & $+$ &   \ref{lem: no component}\\
			\hline $4.13$ & $26$ &$0$& the blow-up of
			$\mathbb{P}^1\times\mathbb{P}^1\times\mathbb{P}^1$ along a curve of
			type $(1, 1, 3)$ & $+$ & \ref{lem: no component} \\
			\hline $5.1$ & $28$ &$0$& the blow-up of the smooth Fano threefold $Y$
			from the family $\textnumero\ 2.29$ along three curves that are
			contracted by the
			blow-up $Y\to Q$ & $+$ & \ref{lem: no component} \\
			\hline $5.2$ & $36$ &$0$& the blow-up of the smooth Fano threefold $Y$
			in the family $3.25$ along two curves $C_{1}\ne
			C_{2}$ that are contracted by the blow-up $\phi\colon
			Y\to\nolinebreak\mathbb{P}^3$ and that are contained in the
			same exceptional divisor of the blow-up $\phi$  &
			$+$ & \ref{lem: no component} \\
			\hline $5.3$ & $36$ &$0$& $\mathbb{P}^1\times S_6$  & $+$ & \ref{lem: no component}
			\\
			\hline $6.1$ & $30$ &$0$& $\mathbb{P}^1\times S_5$  & $+$ & \ref{lem: no component}
			\\
			\hline $7.1$ & $24$ &$0$& $\mathbb{P}^1\times S_4$  & $+$ & \ref{lem: no component}
			\\
			\hline $8.1$ & $18$ &$0$& $\mathbb{P}^1\times S_3$  & $+$ & \ref{lem: no component}
			\\
			\hline $9.1$ & $12$ &$0$& $\mathbb{P}^1\times S_2$  & $+$ & \ref{lem: no component}
			\\
			\hline $10.1$ & $6$ &$0$& $\mathbb{P}^1\times S_1$  &  $+$ & \ref{lem: no component}
			\\
			\hline
		\end{longtable}
	}

\end{document}